\documentclass[10pt]{amsart}
\usepackage{amsmath} 
\numberwithin{equation}{section}
\usepackage{amssymb,amsmath,amsthm}
\usepackage{microtype}
\usepackage{slashed}
\usepackage{xcolor}
\usepackage[hyperfootnotes=false]{hyperref}
\hypersetup{linktoc = all}                
\hypersetup{pdfborderstyle={/S/U/W 0.5}}  
\hypersetup{hidelinks}
\hypersetup{bookmarksnumbered}
\hypersetup{
  pdftitle={A nonlinear rod linkage model: zero-energy chains and continuum consistency},
  pdfauthor={Dennis Kriventsov and Philippe G. LeFloch}
}

\makeatletter
\renewcommand{\@settitle}{%
  \begin{center}%
    \baselineskip14\p@\relax
    \bfseries\@title
  \end{center}%
}
\makeatother

\usepackage{graphicx}
\usepackage{subcaption}
\usepackage{placeins}
\usepackage{needspace}
\usepackage{tikz,pgfplots}
\pgfplotsset{compat=1.12}
\usepackage{standalone}
\usepackage{stmaryrd}

\usepackage{amssymb,amsthm}
\usepackage{esint}
\usepackage{epstopdf}
\usepackage{color}
\usepackage[normalem]{ulem}
\usepackage{cancel}
\usepackage{url}
\usepackage[top=1in, left=1in, right=1in, bottom=1in]{geometry}

\newtheorem{theorem}{Theorem}[section]
\newtheorem{lemma}[theorem]{Lemma}
\newtheorem{proposition}[theorem]{Proposition}

\newtheorem{question}[theorem]{Question}
\newtheorem{conjecture}[theorem]{Conjecture}
\newtheorem{definition}{Definition}[section]

\definecolor{revisionviolet}{rgb}{0.48,0.08,0.66}

\def\Z{{\mathbb Z}}
\def\R{{\mathbb R}}

\def\a{\alpha}
\def\b{\beta}

\def\d{\delta}

\def\n{\nabla}
\def\p{\partial}

\def\s{\sigma}
\def\t{\tau}

\def\g{\gamma}

\def\1{\left(}
\def\2{\right)}
\def\3{\left\{}
\def\4{\right\}}
\def\8{\infty}

\newcommand \bse {\begin{subequations}}
\newcommand \ese {\end{subequations}}

\newcommand \Kt 		{\widetilde K}
\newcommand \thetab	{\overline\theta}
\newcommand \bl   		{\overline{l}}

\newcommand \be {\begin{equation}}
\newcommand \bel {\be\label}
\newcommand \ee {\end{equation}}
 
\newcommand \RR {\mathbb R} 
\newcommand \rt {\widetilde r} 
\newcommand \lt {\widetilde l} 
\newcommand \Acal {\mathcal A}
\newcommand \del \partial

\newcommand \bei {\begin{itemize}}
\newcommand \eei {\end{itemize}}

\newcommand \Lbf {\mathbf L}
\newcommand \Gbf {\mathbf G}

\begin{document}
   
\title[A discrete rod-linkage model]{\Large Zero-energy standing waves and continuum limits
\\
for a discrete rod-linkage model}

\author[D. Kriventsov and P.G. L{\tiny e}Floch]{Dennis Kriventsov$^1$ and Philippe G. L{\scriptsize e}Floch$^2$}

\begin{center} 
{  \sl In memory of Robert V. Kohn$^*$}
\end{center}

\begin{abstract}
We study a discrete Lagrangian model for a one-dimensional chain of rigid rods coupled by elastic springs. We characterize the geometry of its zero-energy constraint and distinguish the spinner and flipper regimes. Under graph-type hypotheses, selected zero-energy chains converge, after rescaling, to trajectories of an autonomous ordinary differential equation. For the physical linkage, we identify a simple alternating selection criterion under which chains form a unique monotone heteroclinic family in the spinner regime. In the flipper regime, we determine the topology of the constraint set and the number of successors, but leave global branch selection open. We also derive one- and two-component continuum residual expansions, distinguishing consistency from convergence of time-dependent solutions. Reproducible numerical experiments test the selected spinner recurrence and its asymptotic multiplier; for a simplified one-component model with smooth periodic data, they recover first- and second-order discrepancies for the leading and corrected continuum equations, respectively.
\end{abstract}

\footnotetext{$^1$ Department of Mathematics, Rutgers University, 110 Frelinghuysen Road, Piscataway, NJ 08854-8019, USA. Email: {\em dnk34@math.rutgers.edu.}
\newline 
$^2$
Laboratoire Jacques-Louis Lions \& Centre National de la Recherche Scientifique, 
Sorbonne Universit\'e, 4 Place Jussieu, 75252 Paris, France. Email: {\em contact@philippelefloch.org.}
%
\newline 
$^*$ \it The authors are very grateful to Bob Kohn for suggesting this problem and for many stimulating discussions. Bob had a unique ability to bridge the gap between communities in physics and materials science on one hand and nonlinear PDE and calculus of variations on the other hand, translating into rigorous, simple, mathematics ideas and experiments that seemed to defy our abilities to even describe in that language. His work spanned some of the most complex and nonlinear phenomena, including fracture, wrinkling, and fluids, and opened the way to their mathematical study. Attending his seminar was a formative experience, and broadened one's perspective of what mathematics is really about and what kind of knowledge it can create. His presence in our community is sorely missed.
}
 \maketitle 



\section{Introduction and main results}
\label{sec:1} 

\subsection{Scope and questions}

Mechanical linkages provide elementary examples in which a finite-dimensional geometric constraint produces nonlinear waves on a large lattice. Kane and Lubensky \cite{KL} introduced a rotor chain, then studied by Chen, Upadhyaya, and Vitelli \cite{CUV}, which is especially instructive: its linearized mechanics has a polarized edge mode, whereas its finite-amplitude motion transports a \emph{zero-energy deformation} through the chain. Related work also has emphasized the asymmetry of kinks and antikinks and the role of the discrete geometry in their propagation \cite{ZhouChenUpadhyayaVitelli}. In \cite{ST, L} the discrete lattice model's zero-energy modes and dynamics are considered further. The work \cite{Berry} examines the question of constructing similar mechanical systems with prescribed structure.

The mathematical problem of interest here is to derive a \emph{continuum equation}, but more importantly is also to determine \emph{which discrete branches} that equation represents and \emph{on which space--time} scale the representation is valid.

The same distinction appears in the theory of phase boundaries. A discrete model may select a transition that is not determined by the leading continuum equation alone, and a higher-order correction may affect the observed profile before the asymptotic regime is reached; see, for example, \cite{AbeyaratneKnowles1,HayesLeFloch,LeFloch-ARMA,LeFlochMohamadian}. In the present linkage setup, the leading equation is of nonlinear Klein--Gordon type rather than a system of conservation laws. Nevertheless, the central questions are analogous: existence of connecting profiles, branch selection, stability, and quantitative comparison of discrete and continuum dynamics.


The analysis has four parts.
\bei

\item We derive the discrete Lagrangian equation and determine the geometry of its zero-energy constraint.

\item We construct and classify selected zero-energy standing configurations.

\item We derive residual expansions for one- and two-component continuum fields under stated smoothness and scaling hypotheses.

\item We compare the discrete and continuum models numerically on common grids and with common initial data.
\eei
 

\subsection{Main results}

The main results are as follows.
\begin{enumerate}
\item Under hypotheses~\textup{(H1)--(H6)} in Section~\ref{sec:1periodic}, every initial angle determines a unique graph-type zero-energy chain. Whenever the normalized recurrence converges, the piecewise-linear interpolants converge to the solution of an autonomous ordinary differential equation; see Theorem~\ref{thm:one-periodic-limit}. The preceding classification lemma identifies the endpoint equilibria of every nonconstant chain.
\item For the physical linkage, Theorem~\ref{thm:spinners} constructs a unique monotone heteroclinic chain for each admissible initial angle in the spinner regime, within a prescribed alternating component class. In the flipper regime, Theorem~\ref{prop:flipper-geometry} determines the topology of the zero set and the number of successors, including the two fold points. Global branch selection remains open and is formulated in Conjecture~\ref{conj:flipper-selection}.
\item For the simplified one-component model, Propositions~\ref{prop:first-order-consistency} and \ref{prop:second-order-consistency} give the leading Klein--Gordon equation and its first correction as local residual expansions. For two-component configurations, Section~\ref{sec:4} gives the limiting branch offsets \(\pi\pm2\thetab\), the exact even--odd lattice system, and its Taylor residual. These consistency results do not establish convergence of time-dependent lattice solutions.
\item The numerical experiments in Section~\ref{sec:numerics} test the selected spinner recurrence and multiplier, the local residual orders, and the finite-time discrepancies between the discrete and continuum evolutions for smooth periodic data. They provide reproducible evidence in the stated parameter ranges, but do not prove discrete-to-continuum convergence or stability.
\end{enumerate}
Orbital stability, long-time error control, and flipper branch selection remain open.


\subsection{Organization of this paper}

Section~\ref{sec:2} introduces the physical model, its rescaling, and the parameter regimes. Section~\ref{sec:3} treats abstract zero-energy chains and continuum consistency. Section~\ref{sec:4} develops the two-component formulation for the physical linkage. Section~\ref{sec:5} proves the spinner-chain theorem and analyzes the flipper constraint and the open branch-selection problem. Section~\ref{sec:numerics} reports the numerical experiments.


\section{Rod-linkage geometry, energy, and scaling}
\label{sec:2} 

\subsection{Discrete geometry and variational formulation}

Our analysis and terminology follows that of \cite{CUV}, which already observes some of the key structure of the model.

Fix four physical parameters \(a>0\), \(r>0\), \(\thetab\in(0,\pi/2)\), and \(K>0\). The linkage is defined as follows.

\bei 

\item In the plane, at each point with coordinates $(a k, 0) \in \RR^2$ (with $k\in \Z$) we \emph{anchor one end of a massive rigid rod} of length $r$, which is otherwise \emph{free to rotate} about that anchor. 

\item In the \emph{reference configuration,} the rod at an even site has \emph{angle \(\thetab\)} from the positive \(y\)-axis, while the rod at an odd site has \emph{angle \(\pi-\thetab\)}. With this convention, the free vertex of every reference rod \emph{lies to the right of its pivot.}

\item The free vertices of every adjacent pair are connected by a \emph{massless Hookean spring} of stiffness \(K\), relaxed in the reference configuration. The moment of inertia of each rod is normalized to one. 

\eei 

We denote by \(\bl\) the \emph{relaxed spring length}, which can be determined from $a, r,$ and $\thetab$: a simple trigonometric calculation shows that
\bel{eq:ltilde}
\bl^2 = a^2 + 4r^2 \cos^2 \thetab.
\ee  
The discrete dynamical system of interest can then be fully described in terms of the angle $\theta_k$ of the rod anchored at each point $(ak,0)$: 
the {\em kinetic energy} of each rod is defined to be 
$
 \frac{1}{2} \dot{\theta}_k^2, 
$
while the {\em potential energy of the spring joining sites \(k\) and \(k+1\)} is taken to be 
$
\frac{K}{2}(l_{k,k+1}- \bl)^2, 
$
where $l_{k,k+1}$ denotes the {\sl length of the spring} connecting the $k$-th and $(k+1)$-th rods. Hence, restricting our attention to the rods with fixed vertices between the origin $(0,0)$ and the point $(aN,0)$ for some fixed integer $N$, we introduce the \textbf{finite-chain Lagrangian}
\be
\mathcal L_N := 
 \frac{1}{2} \sum_{k=0}^N \dot \theta_k^2 -  \frac{K}{2} \sum_{k=0}^{N-1} (l_{k,k+1} - \bl)^2,
\ee
while the corresponding action on a time interval \([0,T]\) is
\be
\Acal_N[\theta]:=\int_0^T\mathcal L_N(\theta(t),\dot\theta(t))\,dt.
\ee
Moreover, $l_{k,k+1}$ may be expressed in terms of $\theta_k$ and $\theta_{k+1}$ directly. We write \(\Lbf_{a,r}(\theta_k,\theta_{k+1})=l_{k,k+1}\) for the physical spring length:
\be
\aligned
\Lbf_{a,r}(\theta_k,\theta_{k+1})^2  
& = r^2 \Big( (\cos \theta_k - \cos \theta_{k+1})^2 + (a/r + \sin \theta_{k+1} - \sin \theta_k)^2 \Big)
\\ 
& = a^2 + 4a r\sin\Big(\frac{\theta_{k+1}- \theta_k}{2} \Big)\cos\Big(\frac{\theta_{k+1}+\theta_k}{2} \Big) + 4r^2\sin^2 \Big(\frac{\theta_{k+1}- \theta_k}{2} \Big).
\endaligned
\ee
The variational equation associated with this Lagrangian is the discrete evolution studied below.

\subsection{Zero-energy states and parameter regimes}

A static configuration has zero potential energy precisely when every spring has its relaxed length. Before discussing connecting configurations, we identify the constant and period-two states exactly.

\begin{proposition}[Period-two zero-energy states]\label{lem:period-two-states}
There is no constant zero-energy configuration. The ordered pairs generating period-two zero-energy configurations are exactly
\bel{eq:period-two-states}
(\thetab,\pi-\thetab),\qquad
(\pi-\thetab,\thetab),\qquad
(-\thetab,\pi+\thetab),\qquad
(\pi+\thetab,-\thetab)
\ee
modulo \(2\pi\). Thus there are two configurations up to a translation by one lattice site.
\end{proposition}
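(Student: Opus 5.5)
The plan is to reduce everything to the single scalar equation \(\Lbf_{a,r}(x,y)^2=\bl^2\) and use the explicit formula for \(\Lbf_{a,r}^2\) together with \eqref{eq:ltilde}. Write \(s=\sin\frac{y-x}{2}\) and \(c=\cos\frac{y+x}{2}\). The zero-energy condition for one spring then reads
\[
4ar\,sc+4r^2s^2=4r^2\cos^2\thetab ,\qquad\text{i.e.}\qquad \frac{a}{r}\,sc+s^2=\cos^2\thetab .
\]

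\emph{Constant states.} If \(\theta_k\equiv x\), then \(s=0\). The left side vanishes, while the right side \(4r^2\cos^2\thetab\) is strictly positive because \(\thetab\in(0,\pi/2)\). So no constant configuration has zero energy.

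\emph{Period-two states.} A period-two configuration \((x,y,x,y,\dots)\) needs both \(\Lbf(x,y)=\bl\) and \(\Lbf(y,x)=\bl\). Going from \((x,y)\) to \((y,x)\) changes \(s\) to \(-s\) and leaves \(c\) unchanged. The \(s^2\) term is even in \(s\) and the \(sc\) term is odd. Subtracting the two equations therefore gives \(\frac{2a}{r}\,sc=0\). Since \(a>0\), this leaves two cases.
\begin{itemize}
\item If \(s=0\), then \(x\equiv y\) modulo \(2\pi\). This is a constant state, already excluded.
\item So \(c=0\), i.e. \(x+y\equiv\pi\pmod{2\pi}\). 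Substituting back gives \(s^2=\cos^2\thetab\). Writing \(y=\pi-x\), we get \(s=\sin(\frac{\pi}{2}-x)=\cos x\), so \(\cos^2x=\cos^2\thetab\). Hence \(x\in\{\pm\thetab,\ \pi\pm\thetab\}\) modulo \(2\pi\).
\end{itemize}
Pairing each \(x\) with \(y=\pi-x\) gives exactly the four ordered pairs in \eqref{eq:period-two-states}. Conversely, each of these pairs satisfies both equations, because \(c=0\) kills the odd term and \(s^2=\cos^2\thetab\).

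\emph{Counting modulo translation.} The pairs \((\thetab,\pi-\thetab)\) and \((\pi-\thetab,\thetab)\) are shifts of one another by one lattice site, and so are the other two. The configurations \((\thetab,\pi-\thetab,\dots)\) and \((-\thetab,\pi+\thetab,\dots)\) are genuinely distinct, since \(\thetab\not\equiv\pm\thetab+\pi\) and \(\thetab\neq-\thetab\) modulo \(2\pi\) for \(\thetab\in(0,\pi/2)\). That gives two classes.

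The only delicate step is bookkeeping rather than analysis. Going from \(\cos\frac{x+y}{2}=0\) to \(x+y\equiv\pi\) requires tracking the half-angle ambiguity modulo \(4\pi\), and the sign of \(s\) must be tracked correctly when solving \(s^2=\cos^2\thetab\). I would handle both by working directly with \(x,y\in\R/2\pi\Z\) and checking all four candidates explicitly.
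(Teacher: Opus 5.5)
Your proof is correct and follows essentially the same route as the paper: subtracting the two squared-length equations leaves only the odd term $4ar\,sc = 2ar(\sin y-\sin x)$. The diagonal is then excluded because $\Lbf_{a,r}(x,x)=a<\bl$, and the anti-diagonal $y\equiv\pi-x$ reduces to $\cos^2x=\cos^2\thetab$; your explicit converse check and your verification that the two translation classes are distinct are small additions the paper leaves implicit.
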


\begin{proof}
\bse
A period-two configuration generated by \((x,y)\) is stress free if
\(\Lbf_{a,r}(x,y)=\Lbf_{a,r}(y,x)=\bl\). Subtracting the squares of these two equalities and using the formula above gives
\be
4ar(\sin y-\sin x)=0.
\ee
Hence \(y=x\) or \(y=\pi-x\) modulo \(2\pi\). The first alternative gives \(\Lbf_{a,r}(x,x)=a<\bl\) and is impossible because \(\thetab\in(0,\pi/2)\). In the second alternative, we have 
\be
\Lbf_{a,r}(x,\pi-x)^2=a^2+4r^2\cos^2x.
\ee
Comparison with \(\bl^2=a^2+4r^2\cos^2\thetab\) yields \(\cos^2x=\cos^2\thetab\), and therefore precisely the four pairs in \eqref{eq:period-two-states}.
\ese
\end{proof}

Introduce the dimensionless parameter
\bel{eq-scale-d}
d:=\frac{2r}{a}\sin\thetab, 
\ee
referred to as \textbf{spinner-flipper scale}, whose role is now explained. 
The four period-two states in Proposition~\ref{lem:period-two-states} persist across the regime boundary. The parameter $d$ instead determines the topology of the zero set and the geometry of admissible connecting chains.
\bei
\item In the \emph{spinner range}, defined as 
\be
d>\frac{2}{\sin\thetab},
\ee
the zero set has \emph{two graph components}. The alternating component choice used in Theorem~\ref{thm:spinners}, below, gives a \emph{unique heteroclinic chain} for each admissible initial angle.

\item In the \emph{flipper range},
\be
d<\frac{2}{\sin\thetab},
\ee
the zero set is \emph{one null-homotopic embedded circle with two fold points}; see Theorem~\ref{prop:flipper-geometry}. A one-step successor is generally multivalued.

\eei
At the \textbf{spinner--flipper threshold} \(d=2/\sin\thetab\), the zero set is \emph{singular}: unrestricted switching between local branches also produces zero-energy chains, but \emph{no complete classification} of all such sequences is given in this paper.

In addition, within the flipper range, the value \(d=1\) separates the so-called \emph{non-wobbling} and \emph{wobbling} geometries described in \cite{CUV}. We do not use this notion in the present paper, except to state Conjecture~\ref{conj:flipper-selection}, below.


\subsection{Nondimensionalization and time scale}

We now identify the joint parameter scaling needed to compare the discrete dynamics with continuum equations. We proceed formally and any question about the stability of the stationary configurations is kept for future investigations. 
To investigate continuum limits, we take the cell size parameter
\be
a \rightarrow 0,
\ee
while at the same time selecting appropriate asymptotic behavior for the other parameters $r, \thetab, K$ as functions of $a$. It is convenient to normalize our parameters with $a$. With the physical time variable kept unchanged, the coefficient multiplying the dimensionless spring energy is \(Ka^2\). We therefore set
\bel{eq-rescal-coeff}
\rt = \frac{r}{a}, 
\qquad \Kt = K a^2,
\qquad \lt = \frac{\bl}{a}. 
\ee  
We introduce the continuum time \(T=qt\) by setting \(\theta_k(t)=\theta(x,T)\), with \(x=ka\). The coefficient in the equation written in \(T\) is \(\Kt/q^2\); thus each continuum limit specifies both the material scaling and the time scaling. We henceforth write \(\Lbf:=a^{-1}\Lbf_{a,r}\) for the dimensionless spring length.
It is convenient to regard $\rt$ and $\thetab$ as our \textbf{primary parameters} and express the other parameters $d$ and $\lt$ as functions of $\rt, \thetab$. That is, from \eqref{eq:ltilde} and \eqref{eq-scale-d}, we find 
\be
\lt = \sqrt{1+ 4 \rt^2 \cos^2\thetab}
 = \Lbf(\thetab,\pi-\thetab),  \qquad d = 2\rt \sin \thetab.
\ee  

With this notation and these conventions, we define the \textbf{rescaled finite-chain Lagrangian} as
\bel{eq:action}
\mathcal L_N = \sum_{k=0}^N  \frac{1}{2} \dot \theta_k^2 
        - \frac{\Kt}{2} \sum_{k=0}^{N-1} \big( \Lbf(\theta_k, \theta_{k+1}) - \lt \big)^2
\ee
with
\bel{equa--214}
\Lbf(\theta_1, \theta_2)^2 = 
1 + 4 \rt \sin \Big(\frac{\theta_2 - \theta_1}{2} \Big)\cos \Big(\frac{\theta_2 + \theta_1}{2} \Big) 
+ 4 \rt^2\sin^2 \Big(\frac{\theta_2 - \theta_1}{2} \Big). 
\ee 
We will sometimes use the shorthand notation 
\be
\Gbf(\theta, \omega) := \frac{\Kt}{2}  (\Lbf(\theta, \omega) - \lt)^2, 
\qquad \theta, \omega \in \RR. 
\ee
\needspace{6\baselineskip}
When \(2r\geq a\), the spring length can vanish for some angle pairs: for example, \(y=-x\) and \(2r\sin x=a\). Since \(\bl>0\), the Hookean potential is not differentiable at such a collision. The Euler--Lagrange equation below is therefore used for configurations with positive length on every bond. Every zero-energy chain belongs to this domain, since its bond lengths equal \(\bl\).
We now \emph{extend} the range of the integers $k$ and consider a discrete system on the \emph{real line}. 

\begin{definition}
The stationary points of the time-integrated action associated with \eqref{eq:action} satisfy at interior sites of a finite chain the following {\bf discrete model of rod linkage}, which we also adopt as the local equation for the infinite chain:
\bel{eq:EL}
\ddot \theta_k + \Big(
\del_2 \Gbf(\theta_{k-1}, \theta_k) + \del_1 \Gbf(\theta_k, \theta_{k+1}) 
\Big) = 0, 
\ee
in which the angle variables $\theta_k=\theta_k(t)$ ($k=\ldots, -1, 0, 1, \ldots$) are defined for all integers $k$ and all times $t \geq 0$. The dependence on the mesh scale is encoded in the scaling convention for $\Gbf$ and must be fixed together with the time normalization.
\end{definition}

Equation~\eqref{eq:EL} also applies to finite or periodic chains with the corresponding boundary conditions; Section~\ref{sec:numerics} uses periodic conditions. For fixed dimensionless geometry, time rescaling acts directly on the rescaled stiffness: if \(\{\theta_k(t)\}\) solves \eqref{eq:EL} with coefficient \(\Kt\), then \(\{\theta_k(qt)\}\) solves it with coefficient \(q^2\Kt\).

The model above is immediately generalized by considering a general \textbf{pair-interaction Lagrangian}
\bel{eq:action-g}
\mathcal L_N = \sum_{k=0}^N  \frac{1}{2} \dot \theta_k^2 -   \sum_{k=0}^{N-1} \Gbf(\theta_k, \theta_{k+1}). 
\ee 
This framework avoids writing the cumbersome expressions for $\Gbf$ whenever this is not necessary in our discussion. The precise assumptions which should be made on $\Gbf$ to replicate the behavior of the physical model, or to exhibit simpler behavior of the same type, are explored in the following section.


\section{Abstract zero-energy chains and continuum consistency}
\label{sec:3}

\subsection{Graph-type zero sets: the one-periodic case} 
\label{sec:1periodic}

We consider an infinite nearest-neighbor chain with angles denoted by 
\(\theta_k\in S^1\) and nonnegative pair potential
\(\Gbf_\s(\theta_k,\theta_{k+1})\), where \(\s\to0\) is the continuum parameter. We have two distinct goals here: first, to see how the zero set of \(\Gbf_\s\) defines a discrete formula for stationary zero-energy chains, and how under graph or selected-branch hypotheses this discrete formula has an ODE limit. Second, check that sampling smooth time-dependent fields in the lattice equation \emph{formally} gives limiting continuum residual expansions.  

Set \(M=S^1\times S^1\), and let \(\pi_1,\pi_2\) denote its coordinate projections. We identify \(S^1\) with \([0,2\pi)\); when \(\b<\a\), we use the \emph{cyclic interval} notation \((\a,\b)\) to mean \((\a,2\pi)\cup[0,\b)\). For any finite cyclically ordered fixed-point set \(\{\a_i\}_{i=1}^I\) below, we choose lifts \(\a_1<\cdots<\a_I<\a_1+2\pi\) and set \(\a_{I+1}=\a_1+2\pi\). If \(I=1\), the endpoints of this lifted interval represent the same circle point. Consider a smooth family
\be
\Gbf:[0,1]\times M\longrightarrow[0, + \infty),
\qquad (\s,z)\longmapsto\Gbf_\s(z).
\ee
Let \(Z_\s:=\{z\in M:\Gbf_\s(z)=0\}\) be the zero-energy set and let \(D:=\{(z,z)\in M:z\in S^1\}\) be the diagonal of the state space  \(M=S^1\times S^1\). Define a map from the circle to itself, say \(g_\s:S^1\to S^1\), by requiring
\((\theta,g_\s(\theta))\in Z_\s\). We assume the following condition on the Lagrangian $\Gbf$ and on the map $g_\s$. 
\begin{enumerate}
	\item[{ (H1)}] The zero-energy set $Z_\s$ is a smooth \emph{closed curve} in $M$.

	\vskip.15cm
	
	\item[{ (H2)}] The coordinate projection $\pi_i|_{Z_\s}$ is a diffeomorphism for each $i$, and the induced map \(g_\s\) is orientation preserving.
	\vskip.15cm
	
		\item[{ (H3)}] For \(0<\s\leq1\), the intersection
		\(Z_\s\cap D=\{(\a_i,\a_i):1\leq i\leq I\}\) is \emph{nonempty, finite}, and \emph{independent} of \(\s\). At \(\s=0\), the zero-energy set is  \(Z_0=D\).
	\vskip.15cm
	
	\item[{ (H4)}] One has $\operatorname{dist}^2(z,Z_\s) \lesssim \Gbf_\s(z) \lesssim \operatorname{dist}^2(z,Z_\s)$, uniformly in $\s$. 
	\vskip.15cm
	
		\item[{ (H5)}] For the natural lift of the map $g_\s$ to the real line, say \(\widetilde g_\s:\R\to\R\), one has the global bound 
		\(\|\widetilde g_\s-\operatorname{id}_{\R}\|_{C^1} \leq C\s\).
	\vskip.15cm
	
		\item[{ (H6)}] For every fixed \(x\in D\setminus\{(\a_i,\a_i):1\leq i\leq I\}\), there is \(c_x>0\), independent of \(\s\), such that \(d(x,Z_\s)\geq c_x\s\) for all \(0<\s\leq1\).
\end{enumerate}
In particular, \textup{(H2)} guarantees that \(g_\s\) is well-defined. In \textup{(H6)}, a point \(x\in D\) is identified with its image in \(M\). Moreover, 	all differences involving \(g_\s\), and all piecewise-linear angle interpolants below, are taken in the lift specified in \textup{(H5)}. 

The corresponding \textbf{infinite-chain Lagrangian} is defined as 
\be
\mathcal L_\s(\theta,\dot\theta)
:=\sum_{k\in\Z}\left(
\frac12\dot\theta_k^2-\Gbf_\s(\theta_k,\theta_{k+1})
\right);
\ee
however, since this sum need not converge, we use the associated local equation as the definition: a collection of \(C^2\) curves
\(\theta_k:[0,T]\to S^1\) (with \(k\in\Z\)) is called an \textbf{infinite-chain \(\s\)-solution} if
\bel{eq:simplediscretepde}
\ddot{\theta}_k+\p_2\Gbf_\s(\theta_{k-1},\theta_k)
+\p_1\Gbf_\s(\theta_k,\theta_{k+1})=0
\qquad(k\in\Z).
\ee
For finite or periodic chains, and for compactly supported variations of an infinite chain, this is the Euler--Lagrange equation of the corresponding time-integrated action.

A zero-energy chain is a time-independent configuration for which
\(\Gbf_\s(\theta_k,\theta_{k+1})=0\)
for every \(k\). Since \(\Gbf_\s\geq0\) is smooth, its gradient vanishes on its zero set, so every such chain is an equilibrium solution of \eqref{eq:simplediscretepde}. The classification below concerns stationary chains; it does not classify time-dependent states with vanishing bond potential. Other equilibria may have positive energy on some bonds.

Under the above assumptions, we may classify the zero-energy chains. 

\begin{lemma}\label{lem:one-periodic-classification}
Let $\Gbf_\s$ satisfy \textup{(H1)--(H6)} above, with \(\s>0\) sufficiently small. Let $\{\a_i\}_{i=1}^I$ be the values for which $(\a_i,\a_i)\in Z_\s \cap D$, ordered cyclically. Then \emph{for every angle} $\g \in S^1$, there is a \emph{unique zero-energy chain} $\{\theta_k\}$ with $\theta_0 = \g$. 
\bei 

\item If $\g = \a_i$, this chain is constant.

\item If $\g \in (\a_i,\a_{i+1})$ the chain is strictly monotone in a lift of this interval, with
	\be
	\lim_{k\rightarrow \pm \8} \theta_k \in \{\a_i,\a_{i+1}\}.	
	\ee
	\eei
	\noindent 
Moreover, the chains satisfy
\bel{eq:1}
		\operatorname{dist}_{S^1}(\theta_k,\theta_{k+1})\leq C \s.
\ee
\end{lemma}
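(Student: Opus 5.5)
The plan is to reduce everything to the dynamics of the circle diffeomorphism \(g_\s\) defined through (H2). First, a chain \(\{\theta_k\}\) has zero energy exactly when \((\theta_k,\theta_{k+1})\in Z_\s\) for every \(k\). Since \(\pi_1|_{Z_\s}\) is a diffeomorphism, this is equivalent to \(\theta_{k+1}=g_\s(\theta_k)\). Since \(\pi_2|_{Z_\s}\) is also a diffeomorphism, \(g_\s\) is invertible, and the relation can equivalently be read backward as \(\theta_{k-1}=g_\s^{-1}(\theta_k)\). Hence the chain with \(\theta_0=\g\) exists and is unique, namely \(\theta_k=g_\s^k(\g)\) for all \(k\in\Z\). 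The bound \eqref{eq:1} follows directly from (H5): in the lift, \(|\widetilde g_\s(\theta)-\theta|\le\|\widetilde g_\s-\operatorname{id}\|_{C^0}\le C\s\).

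Next I identify the fixed points of \(g_\s\). A point \(\theta\) is fixed iff \((\theta,\theta)\in Z_\s\cap D\), which by (H3) is the nonempty finite set \(\{\a_i\}\). So \(\g=\a_i\) gives the constant chain. For \(\g\in(\a_i,\a_{i+1})\), I work with the lift \(\widetilde g_\s\) on the lifted interval \([\a_i,\a_{i+1}]\), where \(\a_{I+1}=\a_1+2\pi\). Because \(g_\s\) is an orientation-preserving homeomorphism, \(\widetilde g_\s\) is increasing. Because \(\a_i,\a_{i+1}\) are fixed, \(\widetilde g_\s\) maps \([\a_i,\a_{i+1}]\) onto itself. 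There are no fixed points in the open interval, so \(\widetilde g_\s(\theta)-\theta\) is continuous and nonvanishing there, and therefore has a constant sign. This is where the choice of lift in (H5) matters, so that the displacement is the small quantity \(O(\s)\) rather than a displacement shifted by \(2\pi\). If \(I=1\), the interval is a full turn with endpoints identified, and the same argument applies on \((\a_1,\a_1+2\pi)\).

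Say the sign is positive. Then \(\theta_k\) is strictly increasing in \(k\) and remains in the open interval. By monotone convergence it has limits as \(k\to+\infty\) and \(k\to-\infty\). By continuity of \(\widetilde g_\s\), each limit is a fixed point in \([\a_i,\a_{i+1}]\), hence an endpoint: \(\theta_k\to\a_{i+1}\) forward and \(\theta_k\to\a_i\) backward. The negative case is symmetric. This gives the monotonicity and the endpoint limits stated in the lemma.

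The main subtlety is making sure the lift is consistent, so that monotonicity holds "in a lift of this interval". The choice of lift in (H5) handles this, since the increment is \(O(\s)\), which is below any gap between consecutive \(\a_i\) once \(\s\) is small. This is the only place where smallness of \(\s\) enters. Hypotheses (H4) and (H6) are not needed for this lemma; (H6) presumably becomes relevant later, for the rate in the ODE limit.
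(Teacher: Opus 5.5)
Your proposal is correct and follows essentially the same route as the paper. Both arguments identify zero-energy chains with the bi-infinite orbits of the circle diffeomorphism \(g_\sigma\) from (H2), read off its fixed points \(\alpha_i\) from (H3), use orientation preservation to obtain strictly monotone orbits on each lifted arc converging to its endpoints, and derive the step bound from (H5). Your extra detail, that the (H5) lift fixes each lifted \(\alpha_i\) once \(C\sigma<2\pi\), is the point the paper leaves implicit when it says \(g_\sigma\) maps each arc onto itself.
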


\begin{proof}
	By \textup{(H2)}, $g_\s$ is a diffeomorphism. By definition, the $\a_i$ are precisely its fixed points, and $g_\s$ maps each interval $(\a_i,\a_{i+1})$ monotonically onto itself. A sequence $\{\theta_k\}$ is a zero-energy chain if and only if $\theta_{k+1} = g_\s(\theta_k)$ for every $k$; iteration in both directions gives existence and uniqueness. If the orbit is not fixed, its lift is strictly monotone and its limits are fixed points, hence the two endpoints of the interval. Estimate \eqref{eq:1} follows from \textup{(H5)}.
\end{proof}

\begin{theorem}[Continuum limit of graph-type zero-energy chains]
\label{thm:one-periodic-limit}
Let $\Gbf_\s$ satisfy \textup{(H1)--(H6)} above and let \(\s_j\to0\), and suppose that the following limit exists: 
\be
f_{\s_j}:=\frac{\widetilde g_{\s_j}-\operatorname{id}_{\R}}{\s_j}
\longrightarrow f
\quad\hbox{uniformly on }S^1.
\ee
For \(\g\in S^1\), let \(\{\theta_k^{(j)}\}_{k\in\Z}\) be the zero-energy chain from Lemma~\ref{lem:one-periodic-classification}, and let \(\theta_{\s_j}\) be its piecewise-linear interpolation defined by
\(\theta_{\s_j}(k\s_j)=\theta_k^{(j)}\), using the consistent lift fixed above. Then one has 
\be
\theta_{\s_j}\longrightarrow\theta
\quad\hbox{locally uniformly on }\R,
\ee
where \(\theta\) is the unique solution of
\bel{eq:one-periodic-ode}
\theta'=f(\theta),\qquad \theta(0)=\g.
\ee
If \(\g\in(\a_i,\a_{i+1})\), the trajectory remains in that interval and connects its endpoint equilibria in the direction determined by the sign of \(f\).
\end{theorem}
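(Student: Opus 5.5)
We view the recurrence $\theta^{(j)}_{k+1}=\widetilde g_{\s_j}(\theta^{(j)}_k)=\theta^{(j)}_k+\s_j f_{\s_j}(\theta^{(j)}_k)$ as a forward Euler scheme with step $\s_j$ for the vector field $f_{\s_j}$, and prove convergence by a compactness argument combined with uniqueness for \eqref{eq:one-periodic-ode}. The backward recurrence, $\theta_{k-1}=\widetilde g_{\s_j}^{-1}(\theta_k)$, is treated the same way on negative times. By \textup{(H5)}, the functions $f_{\s_j}$ are bounded in $C^1$ uniformly in $j$, with $\|f_{\s_j}\|_{C^1}\le C$. Hence the limit $f$ is Lipschitz with constant $C$, being a uniform limit of uniformly Lipschitz maps. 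Consequently \eqref{eq:one-periodic-ode} has a unique global solution, and $f$ is $2\pi$-periodic on the lift.

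The main step is a discrete Gronwall estimate. Set $e_k:=\theta^{(j)}_k-\theta(k\s_j)$, where $\theta$ is the ODE solution. Since $\theta\in C^1$ with $\theta'=f(\theta)$ and $f$ Lipschitz, $\theta'$ is Lipschitz in time, so $\theta((k+1)\s_j)=\theta(k\s_j)+\s_j f(\theta(k\s_j))+O(\s_j^2)$. Subtracting this from the recurrence gives
\be
|e_{k+1}|\le |e_k|+\s_j\big(|f_{\s_j}(\theta^{(j)}_k)-f(\theta^{(j)}_k)|+|f(\theta^{(j)}_k)-f(\theta(k\s_j))|\big)+C\s_j^2
\le (1+C\s_j)|e_k|+\s_j\varepsilon_j+C\s_j^2,
\ee
where $\varepsilon_j:=\|f_{\s_j}-f\|_\infty\to0$. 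Since $e_0=0$, iterating yields $\sup_{0\le k\s_j\le T}|e_k|\le e^{CT}T(\varepsilon_j+C\s_j)\to0$, and the same holds on $[-T,0]$ using $\widetilde g_{\s_j}^{-1}-\operatorname{id}=-\s_jf_{\s_j}\circ\widetilde g^{-1}_{\s_j}$. The interpolant $\theta_{\s_j}$ differs from the nodal values by at most $\s_j\|f_{\s_j}\|_\infty$ on each cell, and $\theta$ is $C$-Lipschitz in time, so $\theta_{\s_j}\to\theta$ uniformly on $[-T,T]$ for every $T$. This gives local uniform convergence. The only delicate point is the backward direction and the uniformity of the constants, and both follow from \textup{(H5)}.

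It remains to prove the last claim. By Lemma~\ref{lem:one-periodic-classification}, for $\g\in(\a_i,\a_{i+1})$ every $\theta^{(j)}_k$ lies in that interval, so its limit $\theta(t)$ lies in the closed interval. Moreover $f(\a_i)=\lim_j(\widetilde g_{\s_j}(\a_i)-\a_i)/\s_j=0$ because $\a_i$ is fixed for every $\s$ by \textup{(H3)}. So the endpoints are equilibria, and uniqueness prevents the trajectory from reaching them in finite time. The remaining obstacle is excluding zeros of $f$ inside $(\a_i,\a_{i+1})$, since only then is the trajectory heteroclinic rather than stalling at an interior equilibrium. We obtain this from \textup{(H6)}. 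For an interior $x$, the point $(x,x)$ has distance at least $c_x\s$ from $Z_\s$, while $(x,\widetilde g_\s(x))\in Z_\s$ is at distance $|\widetilde g_\s(x)-x|$ from it. Hence $|f_{\s_j}(x)|\ge c_x$, and therefore $|f(x)|\ge c_x>0$. By continuity $f$ has a constant sign on the interval. The solution is then strictly monotone, bounded, and has no interior equilibria, so it converges as $t\to\pm\infty$ to the two endpoints, in the direction given by the sign of $f$.
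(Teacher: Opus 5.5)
Your proof is correct, but it proves convergence by a different route than the paper.

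The paper's argument is soft. The step bound \eqref{eq:1} makes the interpolants uniformly Lipschitz, so Arzel\`a--Ascoli gives subsequential limits. The Riemann-sum identity $\theta_{\s_j}(y)-\theta_{\s_j}(x)=\int_x^y f_{\s_j}(\theta_{\s_j}(s))\,ds+O(\s_j)$ then passes to the integral form of \eqref{eq:one-periodic-ode}. Finally, uniqueness, from the Lipschitz bound on $f$ inherited via (H5), shows every subsequential limit is the same.

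You instead treat the recurrence as an explicit Euler scheme and run a discrete Gronwall estimate. This gives the quantitative bound $\sup_{|k\s_j|\le T}|\theta_k^{(j)}-\theta(k\s_j)|\lesssim Te^{CT}(\|f_{\s_j}-f\|_\infty+\s_j)$ and needs no subsequence extraction. The cost is a separate treatment of negative indices. You handle it correctly via $\widetilde g_{\s_j}^{-1}-\mathrm{id}=-\s_j f_{\s_j}\circ\widetilde g_{\s_j}^{-1}$, since $\widetilde g_{\s_j}^{-1}\to\mathrm{id}$ uniformly and $f_{\s_j}$ is uniformly Lipschitz. The paper's integral identity needs no inversion, because $\theta_{k+1}=\widetilde g_{\s_j}(\theta_k)$ holds for every $k\in\Z$. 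Both arguments rest on the same ingredient, the Lipschitz continuity of $f$: the paper uses it for uniqueness, you use it for the Gronwall step.

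For the endpoint assertion your argument agrees with the paper's. You get $f(\a_i)=0$ because the $\a_i$ are fixed for every $\s$, and (H6) gives $|f(x)|\ge c_x$ at interior points. Only the elementary inequality $\operatorname{dist}((x,x),Z_\s)\le|\widetilde g_\s(x)-x|$ is needed for this, as you note. The paper additionally records the reverse comparison with constant $\sqrt{1+L^2}$, which this conclusion does not need. You also spell out that uniqueness prevents the trajectory from reaching an endpoint in finite time, which the paper leaves implicit.
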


\begin{proof}
\bse
Estimate \eqref{eq:1} gives a uniform Lipschitz bound for the interpolants, so every sequence has a locally uniformly convergent subsequence. For \(x<y\), after choosing a lift on the relevant arc,
\be
\theta_{\s_j}(y)-\theta_{\s_j}(x)
=\int_x^y f_{\s_j}\big(\theta_{\s_j}(s)\big)\,ds+O(\s_j),
\ee
where the error is uniform on compact intervals. Passing to the limit gives the integral form of \eqref{eq:one-periodic-ode}. Hypothesis \textup{(H5)} bounds \(f_{\s_j}\) in \(C^1\); hence \(f\) is Lipschitz and the initial-value problem is unique. Every convergent subsequence therefore has the same limit, which proves convergence of the full sequence. For a fixed interior point \(x\), the graph property and the uniform Lipschitz bound in \textup{(H5)} make
\(\operatorname{dist}((x,x),Z_\s)\) comparable to
\(\operatorname{dist}_{S^1}(x,g_\s(x))\).

\needspace{6\baselineskip}\noindent Indeed, with \(L:=\sup_{0<\s\leq1}\operatorname{Lip}(g_\s)<\infty\), the triangle inequality gives
\be
\operatorname{dist}_{S^1}(x,g_\s(x))\leq\sqrt{1+L^2}\,\operatorname{dist}((x,x),Z_\s).
\ee
The reverse bound follows by choosing \((x,g_\s(x))\in Z_\s\). Thus \textup{(H6)} and uniform convergence imply \(f(x)\neq0\) away from the \(\a_i\). Since \(g_\s(\a_i)=\a_i\), one also has \(f(\a_i)=0\). Its sign is constant on each intervening arc, and the final assertion follows.
\ese
\end{proof}


\subsubsection{Leading continuum residual}

To identify local residuals for smooth time-dependent fields as $\s \rightarrow 0$, we note that from the smoothness of $\Gbf_\s$ in the $\s$ parameter, we have 
\be
\|\Gbf_\s - \Gbf_0\|_{C^k} \rightarrow 0
\ee
for any $k$. In this case, we have 
\be \label{eq:Gconv}
|\Gbf_\s(\theta, \theta)| \leq C \s^2, \qquad 
|\n \Gbf_\s(\theta, \theta)| \leq C \s,
\ee
Since \(\Gbf_0\) vanishes on the diagonal and is comparable to its squared distance, its Hessian vanishes in the tangent direction and is positive in the normal direction. Hence
\be \label{eq:ddGconv}
|D^2\Gbf_\s(\theta, \theta) - c_0(\theta) \left(\begin{matrix} 1 & -1 \\ -1 & 1 \end{matrix} \right) | \leq C \s,
\ee
and
\be \label{eq:dddGconv}
|D^k \Gbf_\s(\theta, \theta) - D^k \Gbf_0(\theta,\theta)| \leq C_k \s, \qquad {k>2}.
\ee
Here \(c_0(\theta)>0\), and \(C_k\) may depend on \(k\). Indeed, \textup{(H4)--(H5)} yield \(h_\s(\theta):=\Gbf_\s(\theta,\theta)=O(\s^2)\). Joint smoothness then gives \(\del_\s h_0=0\) and \(\del_\theta h_\s=O(\s^2)\), uniformly in \(\theta\). Thus
\be \label{eq:dGconvimproved}
|\p_1 \Gbf_\s(\theta, \theta) + \p_2 \Gbf_\s(\theta, \theta)| \leq C \s^2.
\ee

We write $\s=\b a$ with fixed $\b\geq0$ and fix a time-rescaling factor $q$. We sample a smooth field by $\theta_k(t)=\theta(ka,qt)$ and compare the resulting discrete residual with candidate continuum equations as $a\to0$. Here $a$ is the lattice spacing in the limiting spatial variable. Writing the equation in the variable $T=qt$ divides the effective stiffness by $q^2$; conversely, replacing a solution $\theta(t)$ by $\theta(qt)$ at fixed time multiplies its stiffness by $q^2$, as in Section~\ref{sec:2}. We keep $q$ outside the definition of $\Gbf_\s$ so that the assumptions on the potential remain explicit.
In the formal continuum displays below, dots on \(\theta(x,T)\) denote derivatives with respect to \(T=qt\); dots on the discrete angles \(\theta_k(t)\) denote derivatives with respect to the physical time \(t\).

Recall the discrete equation \eqref{eq:simplediscretepde} satisfied by $\theta_k$, and insert $\theta_k = \theta(x, t q)$, $\theta_{k\pm 1} =  \theta(x, t q) \pm a \theta'(x, t q) + \frac{a^2}{2} \theta''(x,tq)+O(a^3)$ (here $x = k a$):
\be
\begin{aligned}
-q^2 \ddot{\theta}  &=  \p_2 \Gbf_\s (\theta - a \theta' + \frac{a^2}{2} \theta'', \theta) 
+ \p_1 \Gbf_\s (\theta, \theta + a \theta' + \frac{a^2}{2} \theta'') +O(a^3) \\
   &\approx  [\p_{1}\Gbf_\s(\theta,\theta) + \p_{2}\Gbf_\s(\theta,\theta)] + \p_{12} \Gbf_\s(\theta,\theta)a^2 \theta''  + [\p_{112}\Gbf_\s(\theta,\theta) + \p_{122}\Gbf_\s(\theta,\theta)] \frac{a^2}{2}(\theta')^2\\
   &\qquad  + O(a^3).
\end{aligned}
\ee
Here \(\theta\) and its derivatives are evaluated at \((x,tq)\). 

The terms in this expansion have three distinct orders.
\bei
\item The zero-order force is $O(a^2)$ by \eqref{eq:dGconvimproved}.
\item The second-derivative term is $O(a^2)$ because $D^2\Gbf_\s(\theta,\theta)=O(1)$ by \eqref{eq:ddGconv}.
\item The gradient-square term is generally $O(a^2)$, but it is $O(a^3)$ when $D^3\Gbf_0(\theta,\theta)=0$ for every $\theta$.
\eei
We choose $q=a$ and divide by $a^2$ to obtain a PDE with terms of order one:
\be
0 = \ddot{\theta} + \frac{1}{a^2}[\p_{1}\Gbf_\s(\theta,\theta) + \p_{2}\Gbf_\s(\theta,\theta)] + \p_{12} \Gbf_\s(\theta,\theta) \theta''  + [\p_{112}\Gbf_\s(\theta,\theta) + \p_{122}\Gbf_\s(\theta,\theta)] \frac{1}{2}(\theta')^2 + O(a).
\ee
Note that the term with second derivatives of \(\theta\) has a negative sign (from \eqref{eq:ddGconv}), making this a hyperbolic equation. The first term is of zero order (so a semilinear Klein--Gordon type effect). When $D^3\Gbf_0(\theta,\theta)=0$ for every $\theta$, the last term is of the same order as the error in this approximation. Note, furthermore, that from \eqref{eq:ddGconv} replacing $\p_{12} \Gbf_\s(\theta,\theta)$ by $\p_{12} \Gbf_0(\theta,\theta)$ incurs errors of order at most $a$ as well. The same cannot be done for the first term (it would create an order-one error). Therefore, a simplified equation at the same level of approximation, under $D^3\Gbf_0(\theta,\theta)=0$, would be
\be \label{eq:PDEsimple}
0 = \ddot{\theta} + \frac{1}{a^2}[\p_{1}\Gbf_\s(\theta,\theta) + \p_{2}\Gbf_\s(\theta,\theta)] + \p_{12} \Gbf_0(\theta,\theta) \theta'' + O(a).
\ee


Let us compute this derivation for our specific example, given by 
\bel{eq:simple-potential}
\Gbf_\s(\theta, \omega) = \sin^2\!\left(\frac{\omega - \theta - \sigma \cos \theta}{2}\right). 
\ee
For the stationary-chain hypotheses, this example is restricted to \(\sigma<1\): at \(\sigma=1\), the derivative of \(g_\sigma(\theta)=\theta+\sigma\cos\theta\) vanishes at \(\theta=\pi/2\), so \textup{(H2)} fails. The consistency calculations below use only \(\sigma=\beta a\to0\).
For a literal application of Theorem~\ref{thm:one-periodic-limit} on its full parameter interval, fix $0<\s_*<1$ and use $\widehat{\Gbf}_\eta:=\Gbf_{\s_*\eta}$ for $0\leq\eta\leq1$. The hypotheses follow from two observations.
\bei
\item The zero set $\widehat Z_\eta:=\{\widehat{\Gbf}_\eta=0\}$ is the graph of $\widehat g_\eta(\theta)=\theta+\s_*\eta\cos\theta$, with $\widehat g_\eta'(\theta)\geq1-\s_*>0$. For $\eta>0$ its fixed points are $\pi/2$ and $3\pi/2$. The displayed formula defines a real lift that is $O(\eta)$-close to the identity in $C^1$, so hypotheses \textup{(H1)--(H3)} and \textup{(H5)} hold.
\item Let $L:=1+\s_*$ and $\delta:=\operatorname{dist}_{S^1}(y,\widehat g_\eta(x))$. The product metric gives $\delta/\sqrt{1+L^2}\leq\operatorname{dist}((x,y),\widehat Z_\eta)\leq\delta$. Since $\widehat{\Gbf}_\eta(x,y)=\sin^2(\delta/2)$, this proves \textup{(H4)}; at $y=x$, one has $\delta=\s_*\eta|\cos x|$, which proves \textup{(H6)} away from the two fixed points.
\eei
Its derivatives satisfy
\be
\aligned
\del_1 \Gbf_\s(\theta, \omega) 
& = \sin\!\left(\frac{\omega - \theta - \sigma \cos \theta}{2}\right)
\cos\!\left(\frac{\omega - \theta - \sigma \cos \theta}{2}\right)
\big( -1 + \sigma \sin \theta  \big),
\\
\del_2 \Gbf_\s(\theta, \omega) 
& = \sin\!\left(\frac{\omega - \theta - \sigma \cos \theta}{2}\right)
\cos\!\left(\frac{\omega - \theta - \sigma \cos \theta}{2}\right),
\\
\del_{12} \Gbf_0(\theta,\theta) 
& = \frac{-1}{2}, 
\hskip2.cm 
\del_{112} \Gbf_0(\theta,\theta) = \del_{122} \Gbf_0(\theta,\theta) = 0.
\endaligned
\ee
The approximate PDE \eqref{eq:PDEsimple} then becomes (using $\s = \b a$)
\be
0 = \ddot \theta + \frac{\b}{2a} \Big( - \sin (\b a \cos \theta) \sin \theta \Big) - \frac{1}{2} \theta''.
\ee
Up to omitting a term of order at most $a$, the second term may be further simplified and we arrive at the following equation. 

The leading continuum model is
\be
\ddot \theta  - \frac{1}{2} \theta'' - \frac{\b^2}{4} \sin 2 \theta = 0, 
\ee

This is a \emph{nonlinear Klein--Gordon equation} with wave speed $1/\sqrt{2}$. We observe that in the range $[\pi/2, 3\pi/2]$ of interest  the term $\sin( 2 \theta)$ vanishes at the end points $\pi/2$ and $3\pi/2$ as well as in the middle of this interval $\pi$, a behavior typical of a \emph{two-well model of phase transition.} 

\begin{lemma}
\label{lem:bond-phase-expansion}
Let \(x=ka\), \(T=at\), and write \(u=\theta(x,T)\). Define
\be
z_k^a:=\theta_{k+1}^a-\theta_k^a-\b a\cos\theta_k^a.
\ee
If \(\theta\in C^4\), then, with all derivatives evaluated at \((x,T)\), one has 
\be
\begin{aligned}
z_k^a={}&a(u_x-\b\cos u)+\frac{a^2}{2}u_{xx}
+\frac{a^3}{6}u_{xxx}+O(a^4),\\
z_{k-1}^a={}&a(u_x-\b\cos u)
-a^2\left(\frac12u_{xx}+\b u_x\sin u\right)\\
&+a^3\left(\frac16u_{xxx}
+\frac{\b}{2}u_{xx}\sin u
+\frac{\b}{2}u_x^2\cos u\right)+O(a^4).
\end{aligned}
\ee
Consequently, the discrete force for \eqref{eq:simple-potential} satisfies
\bel{eq:bond-force-expansion}
\begin{aligned}
&\del_2\Gbf_{\b a}(\theta_{k-1}^a,\theta_k^a)
+\del_1\Gbf_{\b a}(\theta_k^a,\theta_{k+1}^a)\\
& =-a^2\left(\frac12u_{xx}
+\frac{\b^2}{4}\sin(2u)\right)
+a^3\left(\frac{\b}{2}\sin u\,u_{xx}
+\frac{\b}{4}\cos u\,u_x^2\right)+O(a^4).
\end{aligned}
\ee
The remainders are uniform when the indicated derivatives range over a compact set.
\end{lemma}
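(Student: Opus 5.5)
The plan is a direct Taylor expansion in the lattice spacing. The time variable \(T=at\) is held fixed throughout, since the force involves only the spatial neighbours \(\theta_{k\pm1}^a=\theta(x\pm a,T)\). I will first expand the two bond phases, then rewrite the discrete force exactly in terms of them, and finally collect orders. The only ingredients are Taylor's theorem with remainder for \(u\in C^4\) and the derivative formulas for \(\Gbf_\s\) displayed before the lemma. Uniformity of the remainders follows because every remainder is a continuous function of the derivatives of \(u\) up to order four, evaluated at points of \([x-a,x+a]\).

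\emph{Step 1: the expansion of \(z_k^a\).} I write \(u(x+a)-u(x)=au_x+\frac{a^2}{2}u_{xx}+\frac{a^3}{6}u_{xxx}+O(a^4)\) and subtract \(\b a\cos u\). This gives the stated formula with no further work.

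\emph{Step 2: the expansion of \(z_{k-1}^a\).} Here \(z_{k-1}^a=u(x)-u(x-a)-\b a\cos u(x-a)\). The first difference is \(au_x-\frac{a^2}{2}u_{xx}+\frac{a^3}{6}u_{xxx}+O(a^4)\). For the cosine I use \((\cos u)_x=-u_x\sin u\) and \((\cos u)_{xx}=-u_{xx}\sin u-u_x^2\cos u\), which give
\[
\cos u(x-a)=\cos u+a\,u_x\sin u-\frac{a^2}{2}\bigl(u_{xx}\sin u+u_x^2\cos u\bigr)+O(a^3).
\]
Multiplying by \(-\b a\) and adding the difference produces the stated \(a^2\) and \(a^3\) coefficients.

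\emph{Step 3: the force.} From the displayed derivatives,
\[
\del_2\Gbf_{\b a}(\theta_{k-1}^a,\theta_k^a)=\tfrac12\sin z_{k-1}^a,\qquad
\del_1\Gbf_{\b a}(\theta_k^a,\theta_{k+1}^a)=\tfrac12\sin z_k^a\,(-1+\b a\sin u),
\]
using \(\sin\frac z2\cos\frac z2=\frac12\sin z\). The force therefore equals exactly
\[
\tfrac12\bigl(\sin z_{k-1}^a-\sin z_k^a\bigr)+\tfrac{\b a}{2}\sin u\,\sin z_k^a .
\]
Since \(z^a=O(a)\), I replace \(\sin z\) by \(z-z^3/6+O(a^5)\). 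The cubic contributions cancel to \(O(a^4)\): the two phases share the leading term \(a(u_x-\b\cos u)\), so \(z_{k-1}^3-z_k^3=O(a^4)\). In the last term, \(a\sin z_k=az_k+O(a^4)\). The force thus reduces to \(\frac12(z_{k-1}^a-z_k^a)+\frac{\b a}{2}\sin u\,z_k^a+O(a^4)\).

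\emph{Step 4: collecting orders.} Substituting Steps 1 and 2 and collecting powers of \(a\), the \(\pm\frac{\b}{2}a^2u_x\sin u\) terms cancel. The term \(-\frac{\b^2}{2}a^2\sin u\cos u\) becomes \(-\frac{\b^2}{4}a^2\sin 2u\). At order \(a^3\), the two contributions \(\frac{\b}{4}u_{xx}\sin u\) add to \(\frac{\b}{2}u_{xx}\sin u\), and \(\frac{\b}{4}u_x^2\cos u\) survives. This yields \eqref{eq:bond-force-expansion}.

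I expect the main obstacle to be only bookkeeping. In particular, the argument hinges on justifying that the nonlinear \(z^3\) corrections cancel at order \(a^3\), which is why I exploit that the leading terms of \(z_{k-1}^a\) and \(z_k^a\) coincide.
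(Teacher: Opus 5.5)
Your proposal is correct and follows essentially the same route as the paper. You Taylor-expand both bond phases, write the force exactly as \(\tfrac12\sin z_{k-1}^a+\tfrac12(-1+\b a\sin u)\sin z_k^a\), and use \(\sin z=z-z^3/6+O(z^5)\) together with \(z_{k-1}^a-z_k^a=O(a^2)\) to remove the cubic terms at order \(a^3\); your explicit collection of coefficients in Step 4 checks out.
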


\begin{proof}
Taylor expansion at \(x\) gives the two displayed formulas for \(z_k^a\) and \(z_{k-1}^a\). Moreover,
\be
\begin{aligned}
\del_2\Gbf_{\b a}(\theta_{k-1}^a,\theta_k^a)
&=\frac12\sin z_{k-1}^a,\\
\del_1\Gbf_{\b a}(\theta_k^a,\theta_{k+1}^a)
&=\frac12\big(-1+\b a\sin u\big)\sin z_k^a.
\end{aligned}
\ee
Substituting the phase expansions and using
\(\sin z=z-z^3/6+O(z^5)\) gives \eqref{eq:bond-force-expansion}. The cubic contributions have no term of order \(a^3\) because \(z_{k-1}^a-z_k^a=O(a^2)\) and both phases are \(O(a)\); the cubic term multiplied by \(\b a\) is also \(O(a^4)\).
\end{proof}

\needspace{12\baselineskip}
\begin{proposition}[Continuum limits: leading-order consistency]
\label{prop:first-order-consistency}
Let \(\theta\) be \(C^4\) with bounded derivatives on a compact space--time region, set \(\s=\b a\), and sample
\be
\theta_k^a(t):=\theta(ka,at).
\ee
For the potential in \eqref{eq:simple-potential}, define the discrete residual
\be
\mathcal R_k^a:=\ddot\theta_k^a+
\del_2\Gbf_{\b a}(\theta_{k-1}^a,\theta_k^a)
+\del_1\Gbf_{\b a}(\theta_k^a,\theta_{k+1}^a).
\ee
Then, uniformly on compact sets,
\be
a^{-2}\mathcal R_k^a
=\del_T^2\theta-\frac12\theta_{xx}
-\frac{\b^2}{4}\sin(2\theta)+O(a).
\ee
Thus a smooth solution of the leading Klein--Gordon equation has discrete residual \(O(a^3)\) under this sampling.
\end{proposition}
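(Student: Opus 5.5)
The plan is to split the residual \(\mathcal R_k^a\) into its time part and its force part, and then read the force part off Lemma~\ref{lem:bond-phase-expansion}. For the time part, the chain rule applied to \(\theta_k^a(t)=\theta(ka,at)\) gives exactly \(\ddot\theta_k^a(t)=a^2\,\del_T^2\theta(ka,at)\). There is no remainder here because the sampling is exact in time. After dividing by \(a^2\), this contributes \(\del_T^2\theta\), evaluated at \((x,T)=(ka,at)\).

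For the force part, I will invoke \eqref{eq:bond-force-expansion} with \(u=\theta(x,T)\). It says that the sum \(\del_2\Gbf_{\b a}(\theta_{k-1}^a,\theta_k^a)+\del_1\Gbf_{\b a}(\theta_k^a,\theta_{k+1}^a)\) equals \(-a^2\bigl(\tfrac12u_{xx}+\tfrac{\b^2}{4}\sin 2u\bigr)+a^3 B+O(a^4)\). Here \(B=\tfrac{\b}{2}\sin u\,u_{xx}+\tfrac{\b}{4}\cos u\,u_x^2\) is bounded whenever \(u_x\) and \(u_{xx}\) range over a compact set. Dividing by \(a^2\) and adding the time part then yields
\be
a^{-2}\mathcal R_k^a=\del_T^2\theta-\tfrac12\theta_{xx}-\tfrac{\b^2}{4}\sin(2\theta)+aB+O(a^2),
\ee
and \(aB+O(a^2)=O(a)\) gives the claim.

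The uniformity step is where I would be most careful. The lemma's remainders are uniform provided the derivatives of \(\theta\) up to fourth order lie in a compact set. The hypothesis (\(C^4\) with bounded derivatives on a compact space--time region) ensures this. There is one point to check: the Taylor expansions of \(\theta_{k\pm1}^a\) about \(x\) use values at \(x\pm a\). I will therefore restrict to points \((x,T)\) whose \(a\)-neighbourhood in \(x\) stays inside the region. This costs nothing for \(a\) small, since compact subsets of the interior are then covered.

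The final assertion follows directly. If \(\theta\) solves \(\del_T^2\theta-\tfrac12\theta_{xx}-\tfrac{\b^2}{4}\sin 2\theta=0\), the leading bracket vanishes, so \(a^{-2}\mathcal R_k^a=O(a)\), that is \(\mathcal R_k^a=O(a^3)\) uniformly. No real obstacle is expected, because the substantive work (including the cancellation of cubic terms at order \(a^3\)) was already done in Lemma~\ref{lem:bond-phase-expansion}.
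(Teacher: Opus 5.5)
Your proposal is correct and matches the paper's proof exactly: the inertial term is $\ddot\theta_k^a=a^2\partial_T^2\theta(ka,at)$ with no remainder. Adding it to the force expansion \eqref{eq:bond-force-expansion} from Lemma~\ref{lem:bond-phase-expansion} and dividing by $a^2$ gives the claim, with the $a^3$ term absorbed into $O(a)$; your remark about keeping the $a$-neighbourhood of each sample point inside the region is a harmless detail the paper leaves implicit.
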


\begin{proof}
Since \(\ddot\theta_k^a=a^2\del_T^2\theta(ka,at)\), adding the inertial term to \eqref{eq:bond-force-expansion} and dividing by \(a^2\) gives the assertion.
\end{proof}


\subsubsection{First correction}

We now retain the \emph{next power} of \(a\). Using the expansion 
\be
\theta_{k\pm 1} =  \theta(x, t q) \pm a \theta'(x, t q) + \frac{a^2}{2} {\theta''(x,tq)} \pm \frac{a^3}{6} {\theta'''(x,tq)}+O(a^4),
\ee
we have 
\be
\begin{aligned}
-q^2 \ddot \theta  &\approx  [\p_{1}\Gbf_\s(\theta,\theta) + \p_{2}\Gbf_\s(\theta,\theta)] + \p_{12} \Gbf_\s(\theta,\theta)a^2 \theta''  + [\p_{112}\Gbf_\s(\theta,\theta) + \p_{122}\Gbf_\s(\theta,\theta)] \frac{a^2}{2}(\theta')^2\\
&\qquad { +[-\p_{112}\Gbf_\s(\theta,\theta)+\p_{122}\Gbf_\s(\theta,\theta)]
\frac{a^3}{2}\theta'\theta''}\\
&\qquad	 + [- \p_{1112}\Gbf_\s(\theta,\theta) + \p_{1222}\Gbf_\s(\theta,\theta)] \frac{a^3}{6} (\theta')^3 
+ O(a^4).
\end{aligned}
\ee
Note that there are no terms featuring third derivatives: they cancel. If \(D^3\Gbf_0(\theta,\theta)=0\) for every \(\theta\), which we assume in the computations below, the newly displayed mixed term is \(O(a^4)\). The first two terms on the right are \(O(a^2)\), while the remaining displayed contributions are \(O(a^3)\) or smaller in magnitude. Replacing $\Gbf_\s$ with $\Gbf_0$ in the fourth-derivative term contributes error of order $a^4$ at most; setting $q = a$ as before, we then obtain the formal normalized expansion
\be
\begin{aligned}
0  & = \ddot \theta  + \frac{1}{a^2}[\p_{1}\Gbf_\s(\theta,\theta) + \p_{2}\Gbf_\s(\theta,\theta)] + \p_{12} \Gbf_\s(\theta,\theta) \theta''  + [\p_{112}\Gbf_\s(\theta,\theta) + \p_{122}\Gbf_\s(\theta,\theta)] \frac{1}{2}(\theta')^2\\
&\qquad	 + [- \p_{1112}\Gbf_0(\theta,\theta) + \p_{1222}\Gbf_0(\theta,\theta)] \frac{a}{6} (\theta')^3 + { O(a^2)}.
\end{aligned}
\ee

For the previous example, this yields
\be
\begin{aligned}
&0 = \ddot \theta + \frac{\b}{2a} \Big( - \sin (\b a \cos \theta) \sin \theta \Big) + \frac{1}{2} \cos(\b a \cos(\theta))(-1 + \b a \sin \theta) \theta'' \\
&+ \frac{(\theta')^2}{4}[\sin(\b a \cos(\theta))(-1 + \b a\sin \theta)^2 + \cos(\b a \cos(\theta))(\b a\cos \theta) + \sin(\b a \cos(\theta))(-1 + \b a\sin \theta)  ].
\end{aligned}
\ee
Note the absence of the $(\theta')^3$ term: $[- \p_{1112}\Gbf_0(\theta,\theta) + \p_{1222}\Gbf_0(\theta,\theta)] = 0$ for this example. We simplify further, removing terms of order higher than $a$:
\be
0 = \ddot \theta - \frac{\b^2}{4}\sin 2 \theta + \frac{1}{2} (-1 + \b a \sin \theta) \theta'' + \frac{(\theta')^2}{4} \b a \cos(\theta).
\ee
The first corrected continuum model is
\be
\ddot\theta-\frac12\theta''-\frac{\b^2}{4}\sin(2\theta)
=-\frac{\b a}{4}\Big(2\sin\theta\,\theta''+\cos\theta\,(\theta')^2\Big).
\ee

\needspace{10\baselineskip}
\begin{proposition}[Continuum limits: first corrected continuum equation]\label{prop:second-order-consistency}
Under the assumptions and sampling of Proposition~\ref{prop:first-order-consistency}, with \(\theta\in C^5\), the normalized residual satisfies
\be
\begin{aligned}
a^{-2}\mathcal R_k^a
={}&\del_T^2\theta-\frac12\theta_{xx}-\frac{\b^2}{4}\sin(2\theta)
+\frac{\b a}{4}\big(2\sin\theta\,\theta_{xx}
+\cos\theta\,(\theta_x)^2\big)+O(a^2)
\end{aligned}
\ee
uniformly on compact sets. In particular, for a family of solutions of the corrected continuum equation with uniform \(C^5\) bounds on the compact set, the discrete residual is \(O(a^4)\), one power of \(a\) smaller than for the leading equation.
\end{proposition}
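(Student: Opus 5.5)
The plan is to read the proposition off Lemma~\ref{lem:bond-phase-expansion}, with the remainder pushed one order further. First, since \(\theta_k^a(t)=\theta(ka,at)\), the inertial term is \(\ddot\theta_k^a=a^2\del_T^2\theta\) exactly, with no error. Everything therefore rests on the force expansion \eqref{eq:bond-force-expansion}. Adding \(a^2\del_T^2\theta\) to it and dividing by \(a^2\) gives
\[
a^{-2}\mathcal R_k^a=\del_T^2\theta-\tfrac12\theta_{xx}-\tfrac{\b^2}{4}\sin(2\theta)+a\Big(\tfrac{\b}{2}\sin\theta\,\theta_{xx}+\tfrac{\b}{4}\cos\theta\,\theta_x^2\Big)+a^{-2}\,\mathrm{Rem},
\]
where \(\mathrm{Rem}\) is the \(O(a^4)\) remainder of the lemma. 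The \(a\)-term is exactly \(\frac{\b a}{4}(2\sin\theta\,\theta_{xx}+\cos\theta\,\theta_x^2)\), so the displayed expansion follows once \(a^{-2}\mathrm{Rem}=O(a^2)\). This is precisely the \(O(a^4)\) claim of Lemma~\ref{lem:bond-phase-expansion}.

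I would verify that \(O(a^4)\) carefully under the \(C^5\) hypothesis; this is the one step needing care. Expanding \(\theta(x\pm a,T)\) to order \(a^4\) with a \(C^5\) Lagrange remainder gives both \(z_k^a\) and \(z_{k-1}^a\) up to \(O(a^5)\). The Taylor step for \(\b a\cos\theta_{k-1}^a\) about \(u\) needs only smoothness of \(\cos\). I then write
\[
\tfrac12\sin z_{k-1}^a+\tfrac12(-1+\b a\sin u)\sin z_k^a=\tfrac12\big(z_{k-1}^a-z_k^a\big)+\tfrac{\b a}{2}\sin u\,z_k^a-\tfrac1{12}\big((z_{k-1}^a)^3-(z_k^a)^3\big)+O(a^4).
\]
The \(O(z^5)\) terms and \(\b a\,z^3\) are \(O(a^4)\). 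The cubic difference factors as \((z_{k-1}-z_k)(z_{k-1}^2+z_{k-1}z_k+z_k^2)=O(a^2)\cdot O(a^2)=O(a^4)\). The linear terms then reproduce the lemma's \(a^2\) and \(a^3\) coefficients. The \(a^2\) coefficient uses \(-\tfrac12\b\sin u\cdot(u_x-\b\cos u)+\tfrac12\b\sin u(u_x-\b\cos u)\) cancellations, leaving \(-\frac{\b^2}{4}\sin 2u\) from the \(\b a\sin u\cdot a(-\b\cos u)\) term. The remaining \(a^4\) contributions are bounded by \(C^5\) norms on the compact set.

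Uniformity is immediate because all constants depend only on bounds of \(\theta\) and its derivatives up to order five on a slightly enlarged compact region, which is needed to contain \(x\pm a\).

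For the final claim, take a family \(\theta\) solving the corrected equation
\[
\del_T^2\theta-\tfrac12\theta_{xx}-\tfrac{\b^2}{4}\sin 2\theta=-\tfrac{\b a}{4}\big(2\sin\theta\,\theta_{xx}+\cos\theta\,\theta_x^2\big)
\]
with uniform \(C^5\) bounds. Its \(a\)-independent part and its \(a\)-term cancel exactly in the expansion, so \(a^{-2}\mathcal R_k^a=O(a^2)\) with a uniform constant, that is, \(\mathcal R_k^a=O(a^4)\). This is one power of \(a\) smaller than the \(O(a^3)\) of Proposition~\ref{prop:first-order-consistency}.

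The main obstacle is bookkeeping: making sure that no \(a^4\) term in the force hides an \(a^{-1}\) factor, which is ruled out because \(\mathcal R\) is divided only by \(a^2\). One must also check that the \(C^5\) regularity suffices for a genuine \(O(a^5)\) remainder in the phases and hence \(O(a^4)\) in the force.
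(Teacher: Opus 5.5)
Your proposal is correct and is essentially the paper's proof: add the exact inertial term \(\ddot\theta_k^a=a^2\del_T^2\theta\) to the force expansion \eqref{eq:bond-force-expansion} of Lemma~\ref{lem:bond-phase-expansion}, divide by \(a^2\), and use \((z_{k-1}^a)^3-(z_k^a)^3=(z_{k-1}^a-z_k^a)\big((z_{k-1}^a)^2+z_{k-1}^az_k^a+(z_k^a)^2\big)=O(a^4)\) to see that the cubic terms do not enter at order \(a^3\). Your re-derivation of the lemma does more than is needed, since phase errors of \(O(a^4)\) (hence \(C^4\)) already give an \(O(a^4)\) force remainder; your description of the \(a^2\) cancellation is also garbled---only the terms \(\pm\frac{\b}{2}u_x\sin u\) cancel, and \(-\frac{\b^2}{4}\sin 2u\) comes from \(\frac{\b a}{2}\sin u\cdot a(-\b\cos u)\)---but the coefficients you arrive at are correct.
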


\begin{proof}
The equation \eqref{eq:bond-force-expansion}, together with
\(\ddot\theta_k^a=a^2\del_T^2\theta(ka,at)\), contains every term through order \(a^3\). Division by \(a^2\) gives the stated corrected residual. In particular, the apparently possible cubic Taylor terms do not contribute at this order, as shown in the last sentence of Lemma~\ref{lem:bond-phase-expansion}.
\end{proof}

Whereas Theorem~\ref{thm:one-periodic-limit} is a convergence result for zero-energy chains, Propositions~\ref{prop:first-order-consistency} and \ref{prop:second-order-consistency} only concern the local residual of smooth time-dependent fields; they do not imply dynamical convergence.


\subsection{Two graph components: selected two-periodic chains} 
\label{sec:2periodic}

The key feature of the 1-periodic case was the simple structure of the underlying discrete dynamical system for zero-energy solutions, and the associated set $Z_\s$. We now turn our attention to a more complicated structure for $Z_\s$, which is essentially a perturbation of a \emph{pair} of curves in $M$. 

Indeed, we make the following alternative assumptions about $\Gbf_\s$:
\begin{enumerate}
	\item[{ (P1)}] $Z_\s$ is the disjoint union of two smooth closed curves $Z_\s^1,Z_\s^2$ in $M$.
	\vskip.15cm
	
	\item[{ (P2)}] $\pi_i|_{Z_\s^j}$ is a diffeomorphism for each $i,j$.
		\vskip.15cm
		
	\item[{ (P3)}] For \(0<\s\leq1\), the set of fixed points of
	\(g^1_\s\circ g_\s^2\), denoted by \(\{\a_i\}_{i=1}^I\), is nonempty, finite, and independent of \(\s\).
		\vskip.15cm
		
		\item[{ (P4)}] $\operatorname{dist}^2(z,Z_\s) \lesssim \Gbf_\s(z)\lesssim \operatorname{dist}^2(z,Z_\s)$, uniformly in $\s$.
		\vskip.15cm
		
		\item[{ (P5)}] There are real representatives $\b_1,\b_2$ with $\b_1+\b_2=0$ and compatible lifts $\widetilde g_\s^i:\R\to\R$ of $g_\s^i$ such that
	$\|\widetilde g_\s^i-\operatorname{id}_{\R}-\b_i\|_{C^1([0,2\pi])}\leq C\s$ for $i=1,2$.
	\vskip.15cm
	
		\item[{ (P6)}] For every $x\in\R$ with $x\not\equiv\a_i\pmod{2\pi}$, there is $c_x>0$ independent of $\s$ such that $|\widetilde g_\s^1\circ\widetilde g_\s^2(x)-x|\geq c_x\s$.
		\vskip.15cm
		
		\item[{ (P7)}] \(Z_\s\cap D=\emptyset\) for \(0<\s\leq1\).
\end{enumerate}
Here \(g_\s^j:S^1\to S^1\) is defined by \((\theta,g_\s^j(\theta))\in Z_\s^j\). Assumption \textup{(P5)} gives \(\|\widetilde g_\s^2\circ\widetilde g_\s^1-\operatorname{id}_{\R}\|_{C^1([0,2\pi])}\leq C\s\). Assumption \textup{(P7)} excludes diagonal bonds for each positive \(\s\), but it does not by itself imply \(\b_i\neq0\) in the limit.

\begin{proposition}[Selected two-periodic chains]
\label{prop:selected-two-periodic}
Assume \textup{(P1)--(P7)} and \(0<\s\ll1\). For every \(\g\in S^1\), there is a unique zero-energy chain in the alternating component class
\be
(\theta_{2k},\theta_{2k+1})\in Z_\s^2,\qquad
(\theta_{2k+1},\theta_{2k+2})\in Z_\s^1,
\ee
with \(\theta_0=\g\). Its parity subsequences satisfy
\be
\label{eq:2}
\operatorname{dist}_{S^1}(\theta_k,\theta_{k+2})\leq C\s.
\ee

If \(\g=\a_i\), this chain is period two. If
\(\g\in(\a_i,\a_{i+1})\), its even-site lift is strictly monotone and converges to the fixed endpoints of the lifted interval as \(k\to\pm\infty\). When \(I=1\), these endpoints represent the same point of \(S^1\).
Suppose that, along \(\s_j\to0\), the following $2\pi$-periodic functions converge uniformly on $S^1$:
\be
F_{\s_j}(x):=\frac{\widetilde g_{\s_j}^1(\widetilde g_{\s_j}^2(x))-x}{2\s_j}
\longrightarrow F(x),\qquad x\in\R.
\ee
Then, using the compatible real lifts in \textup{(P5)}, the piecewise-linear interpolants defined by
\(\theta_{\s_j}(2k\s_j)=\theta_{2k}\) converge locally uniformly to the solution of
\be
\theta'=F(\theta),\qquad \theta(0)=\g.
\ee
For interior initial data, this limiting trajectory connects the \emph{same endpoint equilibria} in the compatible real lift.
The translated component class gives the companion family.
\end{proposition}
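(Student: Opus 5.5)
The plan is to reduce the statement to Lemma~\ref{lem:one-periodic-classification} and Theorem~\ref{thm:one-periodic-limit}, applied to the two-step map \(h_\s:=g_\s^1\circ g_\s^2\) acting on even sites. By \textup{(P2)}, each \(g_\s^j\) is a diffeomorphism of \(S^1\). The alternating class condition says exactly that \(\theta_{2k+1}=g_\s^2(\theta_{2k})\) and \(\theta_{2k+2}=g_\s^1(\theta_{2k+1})\). Hence a chain in this class with \(\theta_0=\g\) is the same thing as a bi-infinite orbit \(\theta_{2k}=h_\s^k(\g)\), and the odd sites are then determined. Existence and uniqueness follow by iterating \(h_\s\) and \(h_\s^{-1}\). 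The chain has zero energy because every bond lies in \(Z_\s\).

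Next I would establish the one-step properties of \(h_\s\). By \textup{(P5)}, the lift \(\widetilde h_\s=\widetilde g^1_\s\circ\widetilde g^2_\s\) satisfies \(\|\widetilde h_\s-\operatorname{id}\|_{C^1}\le C\s\), since \(\b_1+\b_2=0\). Indeed, writing \(\widetilde g^i_\s=\operatorname{id}+\b_i+e_i\) with \(\|e_i\|_{C^1}\le C\s\), one has \(\widetilde h_\s(x)=x+e_2(x)+e_1(x+\b_2+e_2(x))\). Periodicity of \(e_1\) lets us use the bound on \([0,2\pi]\) globally. This gives \eqref{eq:2} for the even subsequence. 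For the odd subsequence, the same argument applies to \(g^2_\s\circ g^1_\s\), which is conjugate to \(h_\s\) via \(g^2_\s\).

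For \(\s\) small, \(\widetilde h_\s'\ge 1-C\s>0\), so \(h_\s\) is orientation preserving. Its fixed points are the \(\a_i\) by \textup{(P3)}, and it preserves each lifted interval \((\a_i,\a_{i+1})\). The monotone-orbit argument from Lemma~\ref{lem:one-periodic-classification} then applies. If \(\g=\a_i\), the even sites are constant, so the chain has period two; here \textup{(P7)} ensures \(\theta_1\ne\theta_0\), so the period is exactly two. For interior \(\g\), the lift of \(h_\s^k(\g)\) is strictly monotone and bounded by the lifted endpoints. Its limits as \(k\to\pm\infty\) are fixed points of \(\widetilde h_\s\) modulo \(2\pi\) lying in \([\a_i,\a_{i+1}]\), so they are the two endpoints. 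By \textup{(P6)}, \(\widetilde h_\s(x)\ne x\) in the interior, so the sign of \(\widetilde h_\s-\operatorname{id}\) is constant there. Consequently one limit is \(\a_i\) and the other is \(\a_{i+1}\). When \(I=1\), the lifted interval has length \(2\pi\), and both endpoints represent the same circle point.

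For the continuum limit, I would repeat the proof of Theorem~\ref{thm:one-periodic-limit} with mesh \(2\s_j\) and \(f_{\s_j}\) replaced by \(F_{\s_j}=(\widetilde h_{\s_j}-\operatorname{id})/(2\s_j)\). The \(C^1\) bound gives uniform Lipschitz interpolants and Arzel\`a--Ascoli compactness. The telescoping identity \(\theta(y)-\theta(x)=\int_x^y F_{\s_j}(\theta_{\s_j})\,ds+O(\s_j)\) passes to the limit. Since \(F\) is a uniform limit of \(C^1\)-bounded functions, it is Lipschitz, which gives uniqueness and hence convergence of the full sequence. By \textup{(P6)}, \(|F|\ge c_x/2>0\) away from the \(\a_i\), while \(F(\a_i)=0\). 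So the limit trajectory stays in \((\a_i,\a_{i+1})\) and connects the same endpoints. The translated class \((\theta_{2k},\theta_{2k+1})\in Z^1_\s\) is handled identically with \(g^2_\s\circ g^1_\s\).

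The only genuinely delicate step is the lift bookkeeping in the \(C^1\) bound for \(\widetilde h_\s\): the constant offsets \(\b_i\) must cancel, and the estimate in \textup{(P5)}, stated only on \([0,2\pi]\), must extend by periodicity after the shift by \(\b_2\). Everything else is a direct transcription of the one-periodic arguments.
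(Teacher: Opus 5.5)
Your proposal is correct and follows essentially the same route as the paper. Both reduce to the two-step map \(g_\s^1\circ g_\s^2\), iterate it in both directions for existence and uniqueness, use \textup{(P5)} for the \(O(\s)\) parity estimate and for orientation preservation, use \textup{(P3)} and \textup{(P6)} for the fixed and monotone alternatives, and rerun the compactness and Riemann-sum argument of Theorem~\ref{thm:one-periodic-limit} with mesh \(2\s\). Your explicit lift computation (the cancellation \(\b_1+\b_2=0\) and the periodic extension of the \textup{(P5)} bound) and your use of \textup{(P7)} for exact period two spell out what the paper leaves implicit. One small omission: you state \(F(\a_i)=0\) without justification. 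It follows from the same \(O(\s)\) bound, which forces \(\widetilde h_\s(\a_i)=\a_i\) exactly rather than \(\a_i+2\pi m\); the paper makes this step explicit.
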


\begin{proof}
The recurrence is
\(\theta_{2k+2}=g_\s^1\circ g_\s^2(\theta_{2k})\), with odd terms then determined by \(g_\s^2\). The maps are diffeomorphisms, so iteration in both directions gives existence and uniqueness. Estimate \eqref{eq:2} follows from \textup{(P5)}. For small \(\s\), the two-step map is orientation preserving by \textup{(P5)}. Assumptions \textup{(P3)} and \textup{(P6)} therefore give the fixed and monotone alternatives on every intervening arc.

The continuum limit is the same compactness and Riemann-sum argument as in Theorem~\ref{thm:one-periodic-limit}, now with spatial increment \(2\s\). Assumption \textup{(P5)} makes the normalized maps \(F_\s\) uniformly Lipschitz. It also makes the compatible two-step lift \(O(\s)\)-close to the identity, so the circle fixed points in \textup{(P3)} satisfy \(F_\s(\a_i)=0\) for small \(\s\), and hence \(F(\a_i)=0\). After normalized convergence, \textup{(P6)} implies that \(F\) has no interior zero, which proves the endpoint assertion for the limiting ODE.
\end{proof}


Shifting a chain in the preceding class by one lattice site gives the companion family. In the first class, the bonds in $Z_\s^2$ occur at even indices:
\be
(\theta_k,\theta_{k+1}) \in Z_\s^2 \quad \text{ if and only if } k \text{ is even},
\ee
in the shifted class, they occur at odd indices:
\be
(\theta_k,\theta_{k+1}) \in Z_\s^2 \quad \text{ if and only if } k \text{ is odd}.
\ee

More generally, a zero-energy chain is determined by a sequence $\{\t_k\}_{k\in\Z}$ with $\t_k\in\{1,2\}$ and an initial value $\g\in S^1$. The recurrence is
\be
\theta_k = g^{\t_{k-1}}_\s\circ \ldots \circ g^{\t_j}_\s(\theta_j)
\ee
for any $j<k$, with $\theta_0=\g$. Without additional restrictions on the component sequence, these chains need not have an ODE limit as $\s\to0$.

For the two graph components, adjacent angles need not be close even though the equal-parity increments are \(O(\s)\) for the alternating component classes by \eqref{eq:2}. A time-dependent comparison therefore uses separate fields for the two parity classes. Lemma~\ref{lem:even-odd-system} gives the exact two-cell equations for the physical linkage, and Proposition~\ref{prop:two-component-residual} gives their Taylor residual with spatial step \(h=2a\).

The corresponding stability problem remains open.
\begin{question}[Dynamical persistence of the two-cell manifold]
\label{question:persistence}
Let \(\s=\b a\) with fixed \(\b>0\), and let \(g_\s^2\) denote the selected even-to-odd graph from \textup{(P1)--(P7)}. For a periodic chain, or a finite chain with specified boundary conditions, define its distance from this graph in rescaled phase space by
\be
\label{eq:branch-phase-space-distance}
\begin{aligned}
E_\s(t):=\sup_k\bigg[&
\operatorname{dist}_{S^1}\big(\theta_{2k+1}(t),g_\s^2(\theta_{2k}(t))\big)\\
&+a^{-1}\big|\dot\theta_{2k+1}(t)-(g_\s^2)'(\theta_{2k}(t))\dot\theta_{2k}(t)\big|\bigg].
\end{aligned}
\ee
Which material scaling, energy bounds, and regularity assumptions yield constants \(T,C,c>0\), independent of \(a\), for which
\be
\label{eq:branch-persistence-question}
E_\s(0)\leq c\s
\quad\Longrightarrow\quad
\sup_{|t|\leq T/a}E_\s(t)\leq C\s?
\ee
\end{question}
The distance in \eqref{eq:branch-phase-space-distance} uses the exact selected graph and includes the transverse velocity; proximity of the angles alone cannot control later motion. The question asks whether this branch remains close on a fixed rescaled time interval. The numerical tests in Section~\ref{sec:numerics} concern a smooth one-component periodic comparison and do not address this estimate.
\subsection{Curve-valued recurrence in the non-monotone case}

The graph structure used above \emph{fails in the flipper regime} because the coordinate projections have \emph{folds}. The recurrence can nevertheless be formulated on selected arcs of the constraint curve itself. We therefore impose the following intrinsic assumptions.
\begin{enumerate}
	\item[{ (N1)}] $Z_\s$ is a smooth closed curve in $M$.
	\vskip.15cm
	
	\item[{ (N2)}] There are a \emph{pair of closed arcs} $Z_\s^1,Z_\s^2$ in $Z_\s$ and a pair of diffeomorphisms $g_\s^1: Z_\s^1 \rightarrow Z_\s^2$ and $g_\s^2: Z_\s^2 \rightarrow Z_\s^1$, with the property that $\pi_1 \circ g^i_\s = \pi_2|_{Z^i_\s}$.	\vskip.15cm
	
		\item[{ (N3)}] For \(0<\s\leq1\), the set of fixed points of
		\(g^1_\s\circ g_\s^2\), denoted by \(\{\a_i\}_{i=1}^2\), consists precisely of the two endpoints of \(Z_\s^2\) and is independent of \(\s\).	\vskip.15cm
	
	\item[{ (N4)}] $ \operatorname{dist}^2(z,Z_\s) \lesssim \Gbf_\s(z)\lesssim \operatorname{dist}^2(z,Z_\s)$, uniformly in $\s$.	\vskip.15cm
	
	\item[{ (N5)}] Let \(I=[0,1]\), choose normalized arc-length parametrizations \(X_\s:I\to Z_\s^2\), and set \(p_\s=X_\s^{-1}\circ g_\s^1\circ g_\s^2\circ X_\s\). Then \(\|p_\s-\operatorname{id}_I\|_{C^1(I)}\leq C\s\), with \(C\) independent of \(\s\).\vskip.15cm
	
	\item[{ (N6)}] For every \(x\in(0,1)\), there is \(c_x>0\), independent of \(\s\), such that \(|p_\s(x)-x|\geq c_x\s\).
\end{enumerate}
In this setup,
the maps \(g_\s^i\) act on the arcs \(Z_\s^i\), rather than on \(S^1\). This is the essential distinction from the graph setting, where a coordinate projection identifies each component globally with \(S^1\).

\begin{lemma}[Alternating orbit on the constraint curve]
\label{lem:curve-orbit}
Assume \textup{(N1)--(N3)}. For every \(\g\in Z_\s^2\), there is a unique bi-infinite sequence \(z_k=(\theta_k,\theta_{k+1})\) such that \(z_0=\g\), \(z_{2k}\in Z_\s^2\), \(z_{2k+1}\in Z_\s^1\), and
\be
z_{2k+1}=g_\s^2(z_{2k}),\qquad
z_{2k+2}=g_\s^1(z_{2k+1}).
\ee
The associated angle sequence is a zero-energy chain.
\end{lemma}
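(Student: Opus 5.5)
The plan is to build the sequence \(z_k\) by iterating the two diffeomorphisms in both directions, exactly as in the proofs of Lemma~\ref{lem:one-periodic-classification} and Proposition~\ref{prop:selected-two-periodic}, now applied to arcs of the constraint curve rather than to \(S^1\).

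\emph{Forward construction.} Starting from \(z_0=\g\in Z_\s^2\), I will define recursively
\(z_{2k+1}:=g_\s^2(z_{2k})\in Z_\s^1\) and \(z_{2k+2}:=g_\s^1(z_{2k+1})\in Z_\s^2\) for \(k\geq0\). This is well defined because \textup{(N2)} gives \(g_\s^2:Z_\s^2\to Z_\s^1\) and \(g_\s^1:Z_\s^1\to Z_\s^2\).

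\emph{Backward construction.} Since both maps are diffeomorphisms, hence bijections, I will set \(z_{2k-1}:=(g_\s^1)^{-1}(z_{2k})\) and \(z_{2k-2}:=(g_\s^2)^{-1}(z_{2k-1})\) for \(k\leq0\).

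\emph{Uniqueness.} Any sequence with the stated properties satisfies the same forward relations. Applying the inverse maps to those relations gives the backward ones. Induction on \(|k|\) from \(z_0=\g\) then forces agreement with the constructed sequence.

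\emph{Consistency of the angle sequence.} Write \(z_k=(\pi_1 z_k,\pi_2 z_k)\). I need the definition \(z_k=(\theta_k,\theta_{k+1})\) to be consistent, that is, \(\pi_2 z_k=\pi_1 z_{k+1}\) for every \(k\). Here the intertwining identity \(\pi_1\circ g_\s^i=\pi_2|_{Z_\s^i}\) from \textup{(N2)} does the work.
\begin{itemize}
\item For even indices, \(\pi_1 z_{2k+1}=\pi_1 g_\s^2(z_{2k})=\pi_2 z_{2k}\).
\item For odd indices, \(\pi_1 z_{2k+2}=\pi_1 g_\s^1(z_{2k+1})=\pi_2 z_{2k+1}\).
\end{itemize}
This holds for all \(k\in\Z\), including negative \(k\), because the backward terms also satisfy the forward relations. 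Setting \(\theta_k:=\pi_1 z_k\) therefore gives \(z_k=(\theta_k,\theta_{k+1})\).

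\emph{Zero energy.} Each \(z_k\) lies in \(Z_\s^1\cup Z_\s^2\subset Z_\s\). Hence \(\Gbf_\s(\theta_k,\theta_{k+1})=0\) for every \(k\), so \(\{\theta_k\}\) is a zero-energy chain. It is also an equilibrium, since the gradient of \(\Gbf_\s\) vanishes on its zero set, as noted earlier in the paper.

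Only \textup{(N1)--(N2)} are used; \textup{(N3)} plays no role in existence or uniqueness. The only step needing care is the projection compatibility, and it is immediate from \textup{(N2)}. I therefore expect no genuine obstacle, only bookkeeping of the parity of the indices.
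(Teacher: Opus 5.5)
Your proposal is correct and follows essentially the same argument as the paper: bidirectional iteration of the diffeomorphisms from \textup{(N2)}, the intertwining identity \(\pi_1\circ g_\s^i=\pi_2|_{Z_\s^i}\) to match the shared intermediate angles, and membership of every \(z_k\) in \(Z_\s\) for zero energy. Your additional remark that \textup{(N3)} is not needed for existence or uniqueness is also accurate.
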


\begin{proof}
Forward and backward iteration is well-defined because \(g_\s^1\) and \(g_\s^2\) are diffeomorphisms. The identity
\(\pi_1\circ g_\s^i=\pi_2|_{Z_\s^i}\) ensures that consecutive pairs share the same intermediate angle. Since every \(z_k\) lies in \(Z_\s\), every bond has zero potential energy.
\end{proof}

Assume, in addition, that the parametrizations \(X_\s\) in \textup{(N5)} converge uniformly to an embedding \(X_0:I\to M\), and that the normalized second iterates
\be
F_\s:=\frac{p_\s-\operatorname{id}_I}{2\s}
\ee
converge uniformly on \(I\) to \(F\). For initial points \(\g_\s\in Z_\s^2\) with \(X_\s^{-1}(\g_\s)\to s_0\), let \(s_\s(2k\s)=X_\s^{-1}(z_{2k})\) and interpolate linearly. By \textup{(N5)}, \(F_\s\) is uniformly Lipschitz, so the proof of Theorem~\ref{thm:one-periodic-limit} gives \(s_\s\to s\) locally uniformly, where \(s'=F(s)\) and \(s(0)=s_0\). By \textup{(N6)}, \(|F(x)|\geq c_x/2\) for every \(x\in(0,1)\); thus \(F\) has no interior zero and has a fixed sign there. The embedded even-pair path \(X_\s(s_\s(t))\) then converges to \(X_0(s(t))\) on the limiting constraint arc. Convergence of the odd-pair path additionally requires convergence of \(g_\s^2\circ X_\s\). In the non-monotone case, the limiting arc---not either angle coordinate---is the natural state space.

  
\section{Two-component geometry and continuum consistency for the rod linkage}
\label{sec:4}

\subsection{Asymptotic geometry of the two branches}

We now specialize our abstract two-component study to the physical linkage. First, we prove an \emph{implicit-function description} of the two zero-set branches in the asymptotic spinner regime, and an \emph{exact even--odd rewriting of the lattice equation.}

Fix \(\thetab\in(0,\pi/2)\), write \(R=\rt\), and set \(\s=R^{-1}\). Here \(\Lbf_R\) denotes the dimensionless spring length \(\Lbf\) in \eqref{equa--214} with \(\rt=R\), and \(\lt_R=\sqrt{1+4R^2\cos^2\thetab}\). Dividing the zero-energy constraint by \(2R^2\) gives
\bel{eq:normalized-zero-set}
F_\s(x,y):=1-\cos(y-x)-2\cos^2\thetab
+\s(\sin y-\sin x)=0.
\ee

\begin{proposition}[Limiting spinner branches]\label{prop:spinner-branches}
There exists \(\s_0=\s_0(\thetab)>0\) such that, for \(0\leq\s<\s_0\), the set \(F_\s^{-1}(0)\) is the disjoint union of two smooth graphs
\be
y=g_\s^-(x),\qquad y=g_\s^+(x).
\ee
For every integer \(k\geq0\), with the difference taken in compatible real lifts, one has 
\be
\big\|g_\s^\pm-(\operatorname{id}_{S^1}+\d_\pm)\big\|_{C^k(S^1)}
\leq C_k\s,
\qquad
\d_-=\pi-2\thetab,\quad \d_+=\pi+2\thetab.
\ee
In particular, \(\d_-+\d_+=2\pi\), so either alternating composition converges to the identity as \(\s\to0\).

For \(0<\s<\s_0\), let
\be
\Gbf_\s(x,y)=\frac{\kappa_\s}{2}
\big(\Lbf_R(x,y)-\lt_R\big)^2.
\ee
If \(0<c\leq\kappa_\s R^2\leq C\), then the nonzero eigenvalue of
\(D^2\Gbf_\s\) on the zero set is bounded above and below uniformly for small \(\s\).
\end{proposition}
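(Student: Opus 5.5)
Write \(c:=2\cos^2\thetab-1=\cos(2\thetab)\in(-1,1)\), so that \(F_0(x,y)=0\) is \(\cos(y-x)=-c=\cos(\pi-2\thetab)\). Since \(\pi-2\thetab\in(0,\pi)\), the solutions are exactly \(y-x\equiv\pm(\pi-2\thetab)\pmod{2\pi}\), that is, \(y=x+\d_-\) or \(y=x+\d_+\), with \(\d_-=\pi-2\thetab\) and \(\d_+=\pi+2\thetab\). These two lines are disjoint on the torus because \(\d_+-\d_-=4\thetab\in(0,2\pi)\). Along each of them,
\[
\p_yF_0=\sin(y-x)=\pm\sin(2\thetab)\neq0 .
\]
The plan is therefore a uniform implicit-function argument in the variable \(w=y-x\).

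Rewrite \(F_\s(x,x+w)=1-\cos w-2\cos^2\thetab+\s(\sin(x+w)-\sin x)=:\Phi(\s,x,w)\). The function \(\Phi\) is smooth and \(2\pi\)-periodic in \(x\), and \(\p_w\Phi(0,x,\d_\pm)=\sin\d_\pm=\pm\sin 2\thetab\).

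Step one: by compactness of \(S^1\) and the implicit function theorem, applied near each \((0,x_0,\d_\pm)\), there are \(\s_0,\rho>0\) and smooth functions \(w_\pm(\s,x)\), defined for \(|\s|<\s_0\), such that in the tubes \(|w-\d_\pm|<\rho\) the zero set is exactly \(\{w=w_\pm(\s,x)\}\). Local uniqueness glues the local solutions into global periodic functions.

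Step two: outside the tubes, \(|F_0|\) is bounded below by a positive constant on the compact complement, and \(|F_\s-F_0|\leq2\s\). Hence for small \(\s\) there are no further zeros, and \(F_\s^{-1}(0)\) is the disjoint union of the two graphs \(g_\s^\pm(x)=x+w_\pm(\s,x)\).

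Step three: since \(w_\pm\) is smooth jointly in \((\s,x)\) with \(w_\pm(0,x)=\d_\pm\), Taylor's theorem in \(\s\) with uniform control of \(x\)-derivatives on the compact set \([0,\s_0/2]\times S^1\) gives \(\|w_\pm(\s,\cdot)-\d_\pm\|_{C^k}\leq C_k\s\). This is the claimed estimate, taken in the lift \(\widetilde g^\pm_\s=\operatorname{id}+w_\pm\). The sum \(\d_-+\d_+=2\pi\) is immediate, so \(\widetilde g^+\circ\widetilde g^-\) and \(\widetilde g^-\circ\widetilde g^+\) are within \(O(\s)\) of \(\operatorname{id}+2\pi\), which is the identity on \(S^1\).

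For the Hessian, work on the zero set. There \(\Gbf_\s\) vanishes together with its gradient, so
\[
D^2\Gbf_\s=\kappa_\s\,\nabla\Lbf_R\otimes\nabla\Lbf_R .
\]
Its single nonzero eigenvalue is therefore \(\kappa_\s|\nabla\Lbf_R|^2\). Since \(\Lbf_R^2=1+2R^2(1-\cos(y-x))+2R(\sin y-\sin x)\), we have \(\Lbf_R^2=2R^2\big(F_\s+2\cos^2\thetab+R^{-2}/2\big)\), hence
\[
\nabla\Lbf_R=\frac{R^2\nabla F_\s}{\Lbf_R}.
\]
On the zero set \(\Lbf_R=\lt_R=R\sqrt{\s^2+4\cos^2\thetab}\), which is comparable to \(R\) uniformly for small \(\s\). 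Also \(|\nabla F_\s|\) is comparable to \(|\p_wF_0|\sqrt2=\sqrt2\sin2\thetab\) up to \(O(\s)\). Thus \(|\nabla\Lbf_R|^2\) is comparable to \(R^2\), and the eigenvalue \(\kappa_\s|\nabla\Lbf_R|^2\) is comparable to \(\kappa_\s R^2\in[c,C]\).

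The main obstacle is the uniformity in step one: obtaining a single \(\s_0\) and tube width \(\rho\) valid for all \(x\), together with uniform \(C^k\) bounds. I would handle this with a quantitative implicit function theorem, using the uniform lower bound \(|\p_w\Phi|\geq\tfrac12\sin2\thetab\) on the tubes for small \(\s\) and \(\rho\), and uniform \(C^{k+1}\) bounds on \(\Phi\). An alternative route is to note that \(\Phi(\s,x,\cdot)\) is strictly monotone on each tube and changes sign at its ends, which yields existence and uniqueness directly.
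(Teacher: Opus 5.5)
Your proposal is correct and follows essentially the same route as the paper. The paper likewise uses a uniform implicit-function argument around the offsets \(\d_\pm\), where \(\p_yF_0=\sin\d_\pm\neq0\), excludes other zeros because \(F_\s\to F_0\) uniformly on the torus, and treats the Hessian through the rank-one formula \(D^2\Gbf_\s=\kappa_\s\nabla\Lbf_R\otimes\nabla\Lbf_R\) on the zero set. Your identity \(\nabla\Lbf_R=R^2\nabla F_\s/\Lbf_R\), with \(\Lbf_R=\lt_R\) on the zero set, is a compact version of the paper's direct differentiation and gives the same limits \(R^{-1}\nabla\Lbf_R\to(\pm\sin\thetab,\mp\sin\thetab)\) on \(g_\s^\pm\).
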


\begin{proof}
\bse
At \(\s=0\), the equation \eqref{eq:normalized-zero-set} reduces to
\be
\cos(y-x)=-\cos(2\thetab),
\ee
whose two solutions are \(y-x=\d_\pm\) modulo \(2\pi\). Since
\be
\del_yF_0(x,x+\d_\pm)=\sin\d_\pm=\mp\sin(2\thetab)\neq0,
\ee
the uniform implicit-function theorem on \(S^1\) gives the two graphs and their \(C^k\) estimates. Uniform convergence of \(F_\s\) to \(F_0\) on the compact torus excludes any additional zeros away from these graphs. For the Hessian statement, observe that on the zero set
\be
D^2\Gbf_\s=\kappa_\s\,\nabla\Lbf_R\otimes\nabla\Lbf_R.
\ee
Moreover, \(R^{-1}\nabla\Lbf_R\) converges uniformly on each limiting branch to a nonzero vector. Differentiating \eqref{equa--214} and using \(\Lbf_R=\lt_R\) on the zero set gives the limits \((-\sin\thetab,\sin\thetab)\) on \(g_\s^-\) and \((\sin\thetab,-\sin\thetab)\) on \(g_\s^+\), uniformly in \(x\). The unique nonzero eigenvalue is therefore comparable to \(\kappa_\s R^2\).
\ese
\end{proof}

\subsection{Exact even--odd decomposition}

Let
\be
u_m:=\theta_{2m},\qquad v_m:=\theta_{2m+1}.
\ee
The following rewriting contains no approximation.

\begin{lemma}
\label{lem:even-odd-system}
The discrete Euler--Lagrange equation \eqref{eq:EL} is equivalent to the two-cell lattice system
\bel{eq:even-odd-system}
\begin{aligned}
\ddot u_m
&+\del_2\Gbf(v_{m-1},u_m)+\del_1\Gbf(u_m,v_m)=0,\\
\ddot v_m
&+\del_2\Gbf(u_m,v_m)+\del_1\Gbf(v_m,u_{m+1})=0.
\end{aligned}
\ee
\end{lemma}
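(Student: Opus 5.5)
The statement is a relabeling of \eqref{eq:EL}, so the proof is a direct substitution together with a check that the correspondence between the two index sets is bijective. We write out the Euler--Lagrange equation at a site $k$ and split into the cases $k$ even and $k$ odd. Note that \eqref{eq:EL} involves only the three consecutive angles $\theta_{k-1},\theta_k,\theta_{k+1}$. It also respects the order of arguments in $\Gbf$: the bond to the left of site $k$ enters through $\del_2\Gbf(\theta_{k-1},\theta_k)$, and the bond to the right through $\del_1\Gbf(\theta_k,\theta_{k+1})$.

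\emph{Case $k=2m$.} Here $\theta_k=u_m$, $\theta_{k-1}=\theta_{2(m-1)+1}=v_{m-1}$ and $\theta_{k+1}=\theta_{2m+1}=v_m$. Substituting these into \eqref{eq:EL} gives exactly the first line of \eqref{eq:even-odd-system}.

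\emph{Case $k=2m+1$.} Here $\theta_k=v_m$, $\theta_{k-1}=u_m$ and $\theta_{k+1}=\theta_{2(m+1)}=u_{m+1}$. This gives the second line.

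For the converse, the map $k\mapsto(m,\text{parity})$ is a bijection of $\Z$ onto $\Z\times\{\text{even},\text{odd}\}$. Hence a family $\{\theta_k\}$ satisfies \eqref{eq:EL} for every $k$ if and only if $(u_m,v_m)$ satisfies both lines for every $m$. The time regularity, $C^2$ curves, and the positivity of bond lengths needed for differentiability of $\Gbf$ (discussed before \eqref{eq:EL}) carry over unchanged, because the bonds themselves are unchanged. Only their labels change: bond $(2m,2m+1)$ becomes $(u_m,v_m)$, and bond $(2m+1,2m+2)$ becomes $(v_m,u_{m+1})$.

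No step here is a genuine obstacle. The only points needing care are the following. First, $\Gbf$ is not symmetric, so the argument order in $\del_1$ and $\del_2$ must be preserved; we keep it by always writing each bond as (left site, right site). Second, for finite or periodic chains the boundary sites must be matched appropriately. In the periodic case with an even number $2N$ of sites, the indices $m$ are taken modulo $N$, so that $v_{-1}=v_{N-1}$ and $u_N=u_0$.
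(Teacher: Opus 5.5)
Your proposal is correct and follows the paper's own proof, which applies \eqref{eq:EL} at $k=2m$ and at $k=2m+1$ and substitutes the definitions of $u_m$ and $v_m$. Your added remarks are accurate. The bijectivity of the index correspondence gives the converse direction explicitly, and matching indices modulo $N$ handles a periodic chain with $2N$ sites.
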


\begin{proof}
Apply \eqref{eq:EL} first with \(k=2m\) and then with \(k=2m+1\), and use the definitions of \(u_m\) and \(v_m\).
\end{proof}

Let \(h=2a\) be the physical distance between sites of the same parity and let \(q>0\) be a time-scale factor. Both parity fields are indexed by the even-site pivot coordinate \(x=mh\). For smooth fields \(u=u(x,T)\) and \(v=v(x,T)\), we sample
\be
u_m(t)=u(mh,qt),\qquad v_m(t)=v(mh,qt)
\ee
and set also 
\be
H(u,v):=\del_2\Gbf(v,u)+\del_1\Gbf(u,v).
\ee

\needspace{9\baselineskip}
\begin{proposition}[Two-component Taylor residual]\label{prop:two-component-residual}
Assume \(u,v\in C^3\) with bounded derivatives on a compact space--time region and \(\Gbf\in C^4\) on the corresponding range. For \(h\)-dependent fields or potentials, assume that all pair arguments lie in a common compact subset of their smooth domain and that the stated derivative bounds are uniform as \(h\to0\). Then the residuals of \eqref{eq:even-odd-system} satisfy
\bel{eq:two-component-residual}
\begin{aligned}
\mathcal R_u={}&q^2\del_T^2u+H(u,v)
-h\,\del_{12}\Gbf(v,u)\del_xv\\
&+\frac{h^2}{2}\Big(
\del_{12}\Gbf(v,u)\del_x^2v
+\del_{112}\Gbf(v,u)(\del_xv)^2\Big)+O(h^3),\\
\mathcal R_v={}&q^2\del_T^2v+H(v,u)
+h\,\del_{12}\Gbf(v,u)\del_xu\\
&+\frac{h^2}{2}\Big(
\del_{12}\Gbf(v,u)\del_x^2u
+\del_{122}\Gbf(v,u)(\del_xu)^2\Big)+O(h^3),
\end{aligned}
\ee
uniformly on compact sets.
\end{proposition}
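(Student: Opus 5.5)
The proof is a direct two-variable Taylor expansion. Each sampled equation in \eqref{eq:even-odd-system} has exactly one shifted argument, and I expand only that argument around the unshifted pair. First, the inertial terms are exact: since \(u_m(t)=u(mh,qt)\), the chain rule gives \(\ddot u_m=q^2\del_T^2u\) and \(\ddot v_m=q^2\del_T^2v\), evaluated at \((mh,qt)\). In the \(u\)-equation the bond \(\del_1\Gbf(u_m,v_m)\) involves no shift. The shifted bond is \(\del_2\Gbf(v_{m-1},u_m)\), where \(v_{m-1}=v(x-h,T)\). Symmetrically, in the \(v\)-equation the bond \(\del_2\Gbf(u_m,v_m)\) is unshifted and the shifted bond is \(\del_1\Gbf(v_m,u_{m+1})\), where \(u_{m+1}=u(x+h,T)\).

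Second, I expand the shifted fields. Since \(u,v\in C^3\) with bounded derivatives,
\(v(x-h)=v-h\del_xv+\tfrac{h^2}{2}\del_x^2v+O(h^3)\) and \(u(x+h)=u+h\del_xu+\tfrac{h^2}{2}\del_x^2u+O(h^3)\). Write \(\delta_v\) and \(\delta_u\) for the two increments; both are \(O(h)\).

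Third, I expand the force in its first (respectively second) slot to second order with an integral remainder. This uses \(\Gbf\in C^4\), so \(\del_2\Gbf\) and \(\del_1\Gbf\) are \(C^3\):
\be
\del_2\Gbf(v+\delta_v,u)=\del_2\Gbf(v,u)+\del_{12}\Gbf(v,u)\,\delta_v+\tfrac12\del_{112}\Gbf(v,u)\,\delta_v^2+O(|\delta_v|^3),
\ee
and similarly
\be
\del_1\Gbf(v,u+\delta_u)=\del_1\Gbf(v,u)+\del_{12}\Gbf(v,u)\,\delta_u+\tfrac12\del_{122}\Gbf(v,u)\,\delta_u^2+O(|\delta_u|^3).
\ee
I then substitute the increments and keep terms through \(h^2\); note that \(\delta^2=h^2(\del_x\cdot)^2+O(h^3)\). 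Combining the unshifted pieces gives \(\del_2\Gbf(v,u)+\del_1\Gbf(u,v)=H(u,v)\) in the \(u\)-equation and \(\del_2\Gbf(u,v)+\del_1\Gbf(v,u)=H(v,u)\) in the \(v\)-equation. The first-order terms are \(\mp h\,\del_{12}\Gbf(v,u)\del_x v\) and \(+h\,\del_{12}\Gbf(v,u)\del_x u\), and the \(h^2/2\) brackets match \eqref{eq:two-component-residual} term by term. Both mixed derivatives are evaluated at the ordered pair \((v,u)\), because in both equations the shifted bond has \(v\) in the first slot.

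The only real issue is uniformity of the remainders, and I expect this to be the main (though modest) obstacle. For the physical potential, \(\Gbf\) is smooth only where the bond length is positive, so the segments joining \((v,u)\) to \((v+\delta_v,u)\) and \((v,u+\delta_u)\) must stay inside the smooth domain. By hypothesis all pair arguments lie in a common compact subset \(K\) of that domain. A compact set has positive distance \(\rho\) to the boundary of the open domain, so for \(h\) small these segments, of length \(O(h)\), stay in a compact \(\rho/2\)-neighborhood of \(K\). On that neighborhood the fourth derivatives of \(\Gbf\) are bounded uniformly in \(h\), by the stated uniform derivative bounds for \(h\)-dependent potentials. Together with the uniform \(C^3\) bounds on \(u,v\), every \(O(h^3)\) term is uniform on the compact space--time region, which gives the claimed expansions.
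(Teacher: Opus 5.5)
Your proposal is correct and is essentially the paper's proof: expand only the shifted argument of $\del_2\Gbf(v(x-h),u)$ and $\del_1\Gbf(v,u(x+h))$ through second order in $h$, with the $O(h^3)$ remainders uniform because the derivatives are bounded. Your extra check that the Taylor segments stay in a compact neighborhood of the smooth domain spells out what the paper leaves to its hypotheses; note that the $\mp$ in your first-order $u$-term should simply read $-$.
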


\begin{proof}
In the first equation of \eqref{eq:even-odd-system}, we expand
$
\del_2\Gbf\big(v(x-h,T),u(x,T)\big)
$
through second order in \(h\), while, in the second equation, we expand
$
\del_1\Gbf\big(v(x,T),u(x+h,T)\big).
$
The two force terms have the expansions
\be
\begin{aligned}
\del_2\Gbf(v(x-h),u)
={}&\del_2\Gbf(v,u)-h\del_{12}\Gbf(v,u)\del_xv\\
&+\frac{h^2}{2}\big(\del_{12}\Gbf(v,u)\del_x^2v
+\del_{112}\Gbf(v,u)(\del_xv)^2\big)+O(h^3),\\
\del_1\Gbf(v,u(x+h))
={}&\del_1\Gbf(v,u)+h\del_{12}\Gbf(v,u)\del_xu\\
&+\frac{h^2}{2}\big(\del_{12}\Gbf(v,u)\del_x^2u
+\del_{122}\Gbf(v,u)(\del_xu)^2\big)+O(h^3).
\end{aligned}
\ee
The two displayed Taylor formulas give \eqref{eq:two-component-residual}; the remaining terms are uniform because the stated derivatives are bounded.
\end{proof}

If \(q=h\), division by \(h^2\) produces the singular first-order terms \(h^{-1}\del_xu\), \(h^{-1}\del_xv\) and the algebraic terms \(h^{-2}H\). Their cancellation is not automatic. A formal expansion near a limiting spinner branch may use
\bel{eq:selected-branch-ansatz}
v=u+\d_\pm+h\lambda.
\ee
\needspace{9\baselineskip}
Two separate requirements then arise.
\begin{itemize}
\item The offset \(\d_\pm\) solves only the limiting constraint: \(F_\s(x,x+\d_\pm)=\s(\sin(x+\d_\pm)-\sin x)\). Thus, for fields varying over an interval, \eqref{eq:selected-branch-ansatz} requires a joint scaling \(\s=O(h)\); this condition alone does not cancel the singular forces.
\item The forces from both adjacent bonds must cancel compatibly. The nondegenerate normal Hessian in Proposition~\ref{prop:spinner-branches} alone does not establish that cancellation or control the normal correction \(\lambda\).
\end{itemize}
Substitution of \eqref{eq:selected-branch-ansatz} into the difference of the two equations may constrain \(\lambda\) when the normal Hessian from Proposition~\ref{prop:spinner-branches} is nondegenerate and the bond forces admit a compatible expansion. The averaged equation then may yield a scalar continuum model. Establishing that the discrete dynamics preserves \eqref{eq:selected-branch-ansatz} is precisely the content of Question~\ref{question:persistence}, stated earlier.

Proposition~\ref{prop:two-component-residual} rigorously identifies the local Taylor residual for prescribed smooth fields. The branch reduction \eqref{eq:selected-branch-ansatz}, the resulting scalar evolution equation, and convergence of discrete solutions require a separate stability estimate.

  
\section{Discrete standing configurations and branch selection}
\label{sec:5}

{ 
\subsection{Constraint curve and its projections}

A configuration \(\{\theta_k\}_{k\in\Z}\) has zero potential energy if and only if (see \eqref{equa--214}) 
\be
\Lbf(\theta_k,\theta_{k+1})=\lt
\qquad\hbox{for every } k
\ee
or, equivalently,
\bel{eq:discrete}
f(\theta_k,\theta_{k+1})=0,
\ee
with
\bel{eq:constraint-f}
\begin{aligned}
f(x,y)
&:=\Lbf(x,y)^2-\lt^2\\
&=2\rt(\sin y-\sin x)
+2\rt^2\big(1-\cos(y-x)\big)
-4\rt^2\cos^2\thetab.
\end{aligned}
\ee
We write \(Z=f^{-1}(0)\subset(S^1)^2\). Recall that Proposition~\ref{lem:period-two-states} lists the four points of \(Z\) that generate period-two configurations. We determine when the constraint is regular.

\begin{lemma}
\label{lem:regularity-zero-set}
The value \(0\) is regular for \(f\) if and only if
\be
\rt\sin^2\thetab\neq1,
\ee
or equivalently \(d\neq2/\sin\thetab\). At equality, the only critical point on \(Z\) is \((x,y)=(\pi/2,3\pi/2)\) modulo \(2\pi\).
\end{lemma}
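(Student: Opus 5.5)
We plan to compute the gradient of \(f\) from \eqref{eq:constraint-f} directly and solve the critical-point equations on \(Z\) by hand. Differentiating gives
\be
\del_x f=-2\rt\cos x-2\rt^2\sin(y-x),\qquad
\del_y f=2\rt\cos y+2\rt^2\sin(y-x).
\ee
Adding the two equations \(\del_xf=\del_yf=0\) eliminates the \(\sin(y-x)\) term and gives \(\cos y=\cos x\). So every critical point satisfies \(y=x\) or \(y=-x\) modulo \(2\pi\). We then test each alternative against the constraint \(f=0\).

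\emph{Case \(y=x\).} Here \(f(x,x)=-4\rt^2\cos^2\thetab<0\), because \(\thetab\in(0,\pi/2)\). This is the same observation as in Proposition~\ref{lem:period-two-states}: no diagonal point lies on \(Z\), so this branch contributes no critical point of \(Z\).

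\emph{Case \(y=-x\).} Then \(\sin(y-x)=-\sin 2x\), and the equation \(\del_xf=0\) becomes
\be
2\rt\cos x\,(2\rt\sin x-1)=0.
\ee
On this antidiagonal the constraint function is
\be
f(x,-x)=-4\rt\sin x+4\rt^2\sin^2x-4\rt^2\cos^2\thetab .
\ee
This produces three subcases.
\begin{itemize}
\item If \(\sin x=1/(2\rt)\), then \(f=-1-4\rt^2\cos^2\thetab<0\), so the point is not on \(Z\).
\item If \(\cos x=0\) and \(\sin x=-1\), then \(f=4\rt+4\rt^2\sin^2\thetab>0\), so again the point is not on \(Z\).
\item If \(\cos x=0\) and \(\sin x=1\), that is \((x,y)=(\pi/2,3\pi/2)\), then \(f=4\rt(\rt\sin^2\thetab-1)\). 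This vanishes exactly when \(\rt\sin^2\thetab=1\).
\end{itemize}

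Combining the cases, a critical point of \(f\) lies on \(Z\) if and only if \(\rt\sin^2\thetab=1\). In that case the only such point is \((\pi/2,3\pi/2)\) modulo \(2\pi\). This proves both assertions of the lemma. The equivalence with the threshold on \(d\) follows from \(d=2\rt\sin\thetab\): the condition \(d=2/\sin\thetab\) holds if and only if \(\rt\sin^2\thetab=1\).

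There is no real obstacle in this argument. The only care needed is in the sign bookkeeping when evaluating \(f\) on the antidiagonal, and in checking that the diagonal is excluded through the strict inequality \(\cos\thetab>0\).
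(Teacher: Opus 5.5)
Your proposal is correct and follows the paper's proof essentially step for step. Both compute \(\nabla f\), deduce \(\cos x=\cos y\), exclude the diagonal, and on the antidiagonal analyze \(\cos x\,(2\rt\sin x-1)=0\) by evaluating \(f\) at each root. Your explicit check that \((3\pi/2,\pi/2)\) gives \(f=4\rt+4\rt^2\sin^2\thetab>0\) fills in the step the paper leaves implicit when it says only \((\pi/2,3\pi/2)\) can lie on \(Z\).
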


\begin{proof}
\bse
The gradient is
\be
\nabla f(x,y)=
\begin{pmatrix}
-2\rt\cos x-2\rt^2\sin(y-x)\\
 2\rt\cos y+2\rt^2\sin(y-x)
\end{pmatrix}.
\ee
If \(\nabla f=0\), then \(\cos x=\cos y\). The alternative \(y=x\) is impossible on \(Z\). Hence \(y=-x\) modulo \(2\pi\), and the first component of the gradient gives
\be
\cos x\big(1-2\rt\sin x\big)=0.
\ee
If \(\sin x=(2\rt)^{-1}\), direct substitution gives
\(f(x,-x)=-1-4\rt^2\cos^2\thetab<0\). If \(\cos x=0\), only
\((x,y)=(\pi/2,3\pi/2)\) can lie on \(Z\), and there
\be
f(\pi/2,3\pi/2)=4\rt\big(\rt\sin^2\thetab-1\big). \qedhere
\ee 
\ese
\end{proof}

For the projections of \(Z\), let us define
\bel{eq:projection-threshold}
S_{\rt}:=\sqrt{1+4\rt^2\cos^2\thetab}
-2\rt\cos^2\thetab.
\ee

\begin{lemma}[Projection discriminant]\label{lem:projection-discriminant}
For fixed \(x\), the equation \(f(x,y)=0\) is equivalent to
\bel{eq:line-circle}
(1-\rt\sin x)\sin y-\rt\cos x\cos y
=\sin x+\rt\cos(2\thetab).
\ee
Its discriminant is
\bel{eq:projection-discriminant}
\begin{aligned}
D(x)
&=\cos^2x-4\rt\cos^2\thetab\sin x
+\rt^2\sin^2(2\thetab)\\
&=(S_{\rt}-\sin x)
\big(\sin x+S_{\rt}+4\rt\cos^2\thetab\big).
\end{aligned}
\ee
More precisely, \(D(x)>0\), \(D(x)=0\), or \(D(x)<0\) according as there are two, one, or no values of \(y\).

If \(\rt\sin^2\thetab>1\), then \(S_{\rt}>1\), \(D(x)>0\) for every \(x\), and \(Z\) is the disjoint union of two graphs over either coordinate circle. If \(\rt\sin^2\thetab<1\), then \(-1<S_{\rt}<1\), and there are unique
\be
-\frac{\pi}{2}<\theta_-<\frac{\pi}{2}
<\theta_+<\frac{3\pi}{2}
\ee
with \(\sin\theta_\pm=S_{\rt}\). There are no successors for
\(x\in(\theta_-,\theta_+)\), one at the endpoints, and two elsewhere.
\end{lemma}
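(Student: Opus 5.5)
We start from the explicit form \eqref{eq:constraint-f} of \(f\) and fix \(x\). Expanding \(\cos(y-x)=\cos x\cos y+\sin x\sin y\) and dividing by \(2\rt\) turns \(f(x,y)=0\) into
\[
\sin y-\sin x+\rt-\rt\cos x\cos y-\rt\sin x\sin y-2\rt\cos^2\thetab=0 .
\]
Since \(1-2\cos^2\thetab=-\cos(2\thetab)\), this is exactly \eqref{eq:line-circle}. Writing it as \(A\sin y+B\cos y=E\), with \(A=1-\rt\sin x\), \(B=-\rt\cos x\) and \(E=\sin x+\rt\cos(2\thetab)\), we are intersecting the unit circle \((\sin y,\cos y)\) with a line. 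The line meets the circle in two, one, or no points according as \(A^2+B^2-E^2\) is positive, zero, or negative, provided \((A,B)\neq(0,0)\). If \(A=B=0\), then \(\cos x=0\) and \(\rt\sin x=1\), so \(x=\pi/2\) and \(\rt=1\); in that case \(E\neq0\) because \(1+\cos2\thetab>0\), so there is no solution. We will check that \(D\) is negative there, so the trichotomy still holds.

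Next we compute
\[
A^2+B^2-E^2=1-2\rt\sin x+\rt^2-\sin^2x-2\rt\sin x\cos2\thetab-\rt^2\cos^2 2\thetab .
\]
Using \(1-\sin^2x=\cos^2x\), \(1+\cos2\thetab=2\cos^2\thetab\) and \(1-\cos^2 2\thetab=\sin^2 2\thetab\), this becomes the first form of \(D(x)\). At the exceptional point above this gives \(D=-4\cos^2\thetab+\sin^2 2\thetab=-4\cos^4\thetab<0\), which is consistent with the trichotomy.

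For the factorization, we view \(D\) as a quadratic in \(s=\sin x\):
\[
D=-s^2-4\rt\cos^2\thetab\, s+1+\rt^2\sin^2 2\thetab .
\]
Its roots are \(s=-2\rt\cos^2\thetab\pm\sqrt{4\rt^2\cos^4\thetab+1+4\rt^2\sin^2\thetab\cos^2\thetab}\). The radicand is \(1+4\rt^2\cos^2\thetab\), so the roots are \(S_{\rt}\) and \(-S_{\rt}-4\rt\cos^2\thetab\). This gives the product form.

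The regime statements then reduce to comparing \(S_{\rt}\) with \(\pm1\). This is the step needing the most care, but it is still elementary. The inequality \(S_{\rt}>1\) is equivalent to \(\sqrt{1+4\rt^2\cos^2\thetab}>1+2\rt\cos^2\thetab\). Squaring (both sides are positive) gives \(4\rt^2\cos^2\thetab\sin^2\thetab>4\rt\cos^2\thetab\), that is, \(\rt\sin^2\thetab>1\). The same computation with reversed inequalities shows that \(S_{\rt}<1\) is equivalent to \(\rt\sin^2\thetab<1\).

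We also have \(S_{\rt}>-1\) always, since \(\sqrt{1+4\rt^2c^2}>2\rt c^2-1\) (with \(c=\cos\thetab\)) follows after squaring from \(4\rt c^2(1+\rt s^2)>0\) when the right side is positive, where \(s=\sin\thetab\). The second factor \(\sin x+S_{\rt}+4\rt\cos^2\thetab\) is then at least \(-1+S_{\rt}+4\rt\cos^2\thetab>0\). Hence the sign of \(D(x)\) is the sign of \(S_{\rt}-\sin x\).

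If \(S_{\rt}>1\), then \(D>0\) for every \(x\), so each \(x\) has exactly two successors. Here \(0\) is a regular value by Lemma~\ref{lem:regularity-zero-set}, so \(Z\) is a compact \(1\)-manifold on which \(\pi_1\) is a 2-to-1 local diffeomorphism. The fiber is nowhere degenerate, because \(D>0\) rules out tangency of the line and circle, and tangency would be a vertical tangent of \(Z\). Thus \(\pi_1|_Z\) is a 2-sheeted covering of \(S^1\).

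To see that the cover is trivial, we use continuity of the two roots. The roots can be written \(y=\phi(x)\pm\arccos(E/\sqrt{A^2+B^2})\), where \(\phi(x)\) is the phase of \((A,B)\). Since \(A^2+B^2>0\) in this regime (the degenerate case requires \(\rt=1\), which contradicts \(\rt\sin^2\thetab>1\)), these are two continuous, distinct branches over all of \(S^1\), so \(Z\) is the disjoint union of two graphs. The corresponding statement for \(\pi_2\) follows from the symmetry \(f(x,y)=f(\pi-y,\pi-x)\), which one verifies directly from \eqref{eq:constraint-f}.

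If \(S_{\rt}\in(-1,1)\), the equation \(\sin x=S_{\rt}\) has exactly two solutions \(\theta_-\in(-\pi/2,\pi/2)\) and \(\theta_+=\pi-\theta_-\in(\pi/2,3\pi/2)\). We have \(\sin x>S_{\rt}\), and hence \(D<0\), exactly on \((\theta_-,\theta_+)\). This gives the count of zero, one, or two successors as stated.
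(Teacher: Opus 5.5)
Your route is the paper's: the line--circle form, the discriminant as $A^2+B^2-E^2$, factoring in $\sin x$, comparing $S_{\rt}$ with $\pm1$, and separating the two roots continuously, which is the paper's ``sign of the oriented determinant''. Your computations check out, including the degenerate case $\rt=1$, $x=\pi/2$.

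There is one false step: the symmetry you use to pass to the second projection. Under $(x,y)\mapsto(\pi-y,\pi-x)$ the difference $y-x$ is preserved, but $\sin y-\sin x$ becomes $\sin x-\sin y$. Hence $f(\pi-y,\pi-x)=f(x,y)-4\rt(\sin y-\sin x)$, which is not $f(x,y)$; for instance $f(0,\pi/2)-f(\pi/2,\pi)=4\rt$. So the identity cannot be ``verified directly''. The paper uses $(x,y)\mapsto(-y,-x)$ instead. Then $\sin(-x)-\sin(-y)=\sin y-\sin x$ and $(-x)-(-y)=y-x$, so $f(-y,-x)=f(x,y)$. This map sends fibers of $\pi_1$ to fibers of $\pi_2$, so your decomposition of $Z$ into two graphs over the $x$-circle transfers to the $y$-circle. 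The map $(x,y)\mapsto(\pi+y,\pi+x)$ would also work. With this replacement the spinner-case argument is complete.

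A smaller point: positivity of the second factor does not follow from $S_{\rt}>-1$, as your ``then'' suggests. It follows directly from $-1+S_{\rt}+4\rt\cos^2\thetab=\sqrt{1+4\rt^2\cos^2\thetab}-1+2\rt\cos^2\thetab>0$. This is the paper's remark that the second root $-S_{\rt}-4\rt\cos^2\thetab$ lies below $-1$. You still need $S_{\rt}>-1$ separately, for the existence of $\theta_\pm$ in the flipper case.
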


\begin{proof}
\bse
Expanding \(\cos(y-x)\) in \eqref{eq:constraint-f} gives
\eqref{eq:line-circle}. The squared length of its left-hand coefficient vector minus the square of its right-hand side is
\be
D(x)=1-2\rt\sin x+\rt^2
-\big(\sin x+\rt\cos(2\thetab)\big)^2.
\ee
Factoring this quadratic in \(\sin x\) gives \eqref{eq:projection-discriminant}; its second root is
\(-S_{\rt}-4\rt\cos^2\thetab<-1\).
A line intersects the unit circle in two, one, or no points according to the sign of \(D\). The coefficient vector vanishes only at \(\rt=1\), \(x=\pi/2\); there the right-hand side is \(2\cos^2\thetab>0\), so \(D<0\) and there is no solution. Next, squaring positive quantities shows
\be
S_{\rt}>1\quad\Longleftrightarrow\quad
\rt\sin^2\thetab>1.
\ee
In this regime \(\rt>1\), so the coefficient vector in \eqref{eq:line-circle} never vanishes. The two intersection points are distinguished by the sign of their oriented determinant with that vector; each choice therefore defines a smooth graph over the entire \(x\)-circle.
In the opposite strict regime, the same calculation gives
\(-1<S_{\rt}<1\). The remaining assertions follow from the level sets of \(\sin x\). The equation is symmetric under
\((x,y)\mapsto(-y,-x)\), so the same conclusions hold for the other projection.
\ese
\end{proof}
 
\subsection{Multipliers of the period-two recurrence}
The multipliers below concern iteration of the stationary zero-energy constraint, not stability for the time-dependent Euler--Lagrange equation.

\begin{lemma}[Two-step multipliers]\label{lem:two-step-multipliers}
In the spinner regime, label the two graph components so that
\be
(\thetab,\pi-\thetab),(\pi+\thetab,-\thetab)\in Z_1,
\ee
and let \(g_i\) be the successor graph associated with \(Z_i\). For the alternating recurrence, set \(P=g_2\circ g_1\). At its two fixed points,
\be
\begin{aligned}
|P'(\thetab)|
&=\left|
\frac{\cos\thetab+\rt\sin(2\thetab)}
{\cos\thetab-\rt\sin(2\thetab)}
\right|^2>1,\\
|P'(\pi+\thetab)|
&=\left|
\frac{\cos\thetab-\rt\sin(2\thetab)}
{\cos\thetab+\rt\sin(2\thetab)}
\right|^2<1.
\end{aligned}
\ee
Thus \(\thetab\) is repelling and \(\pi+\thetab\) is attracting for the selected forward recurrence.
\end{lemma}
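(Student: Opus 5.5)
The plan is to compute \(P'\) at the two fixed points by implicit differentiation of the constraint \(f\) from \eqref{eq:constraint-f}, combined with the chain rule. The one non-computational step is to identify on which component each period-two pair lies.

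\textbf{Step 1: assigning the period-two pairs to components.} By Proposition~\ref{lem:period-two-states}, the four pairs \((\thetab,\pi-\thetab)\), \((\pi-\thetab,\thetab)\), \((-\thetab,\pi+\thetab)\), \((\pi+\thetab,-\thetab)\) lie on \(Z\) for every \(\rt\). I claim that \((\pi-\thetab,\thetab)\) and \((-\thetab,\pi+\thetab)\) lie on \(Z_2\), not on \(Z_1\).

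The spinner range \(\rt\sin^2\thetab>1\) is a connected interval in \(\rt\). On it, \(0\) is a regular value by Lemma~\ref{lem:regularity-zero-set}, and \(Z\) consists of two graphs by Lemma~\ref{lem:projection-discriminant}. Hence the two components deform continuously in \(\rt\). Since the four points are independent of \(\rt\), the component containing each point cannot change along this interval.

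It therefore suffices to decide membership for large \(\rt\), that is, small \(\s=\rt^{-1}\). There Proposition~\ref{prop:spinner-branches} shows the components are \(C^0\)-close to \(y=x+\d_\mp\). The pairs \((\thetab,\pi-\thetab)\) and \((\pi+\thetab,-\thetab)\) have offset \(y-x\equiv\pi-2\thetab=\d_-\). The other two pairs have offset \(\equiv\pi+2\thetab=\d_+\).

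Consequently \(g_1(\thetab)=\pi-\thetab\), \(g_2(\pi-\thetab)=\thetab\), \(g_1(\pi+\thetab)=-\thetab\), and \(g_2(-\thetab)=\pi+\thetab\). In particular, \(\thetab\) and \(\pi+\thetab\) are fixed points of \(P\).

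\textbf{Step 2: slopes by implicit differentiation.} On each graph, \(g_i'(x)=-\p_xf/\p_yf\). Here \(\p_xf=-2\rt\cos x-2\rt^2\sin(y-x)\) and \(\p_yf=2\rt\cos y+2\rt^2\sin(y-x)\), as in the proof of Lemma~\ref{lem:regularity-zero-set}.

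At \((\thetab,\pi-\thetab)\) we have \(\cos x=\cos\thetab\), \(\cos y=-\cos\thetab\), and \(\sin(y-x)=\sin2\thetab\). This gives
\[
g_1'(\thetab)=\frac{\cos\thetab+\rt\sin2\thetab}{\rt\sin2\thetab-\cos\thetab}.
\]
At \((\pi-\thetab,\thetab)\) the signs of both cosines flip and \(\sin(y-x)=-\sin2\thetab\). This gives
\[
g_2'(\pi-\thetab)=-\frac{\cos\thetab+\rt\sin2\thetab}{\cos\thetab-\rt\sin2\thetab}.
\]
The chain rule \(P'(\thetab)=g_2'(\pi-\thetab)\,g_1'(\thetab)\) then yields the stated squared ratio.

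The same computation at \((\pi+\thetab,-\thetab)\) and \((-\thetab,\pi+\thetab)\) gives the reciprocal ratio for \(P'(\pi+\thetab)\). There \(\cos x\) and \(\cos y\) swap sign relative to the first case while \(\sin(y-x)\) is unchanged, which interchanges the roles of numerator and denominator.

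\textbf{Step 3: nondegeneracy and the inequalities.} The denominators \(\cos\thetab\mp\rt\sin2\thetab=\cos\thetab\,(1\mp 2\rt\sin\thetab)\) are nonzero, because \(2\rt\sin\thetab=d>2/\sin\thetab>2\). Since \(\cos\thetab>0\) and \(\rt\sin2\thetab>0\), we have \(|\cos\thetab+\rt\sin2\thetab|>|\cos\thetab-\rt\sin2\thetab|\). This gives \(|P'(\thetab)|>1\) and \(|P'(\pi+\thetab)|<1\), so \(\thetab\) is repelling and \(\pi+\thetab\) is attracting for the forward recurrence.

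I expect Step 1 to be the main obstacle. The derivative computation is routine, whereas the component labeling needs the continuity argument over the connected spinner range. A fallback for Step 1 is to distinguish the two intersection points directly by the sign of the oriented determinant in Lemma~\ref{lem:projection-discriminant}, evaluated at the four points.
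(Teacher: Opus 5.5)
Your proposal is correct and follows the paper's proof essentially step for step. The paper likewise fixes the component assignment by passing to large \(\rt\), where the branches approach the offsets \(\pi\mp2\thetab\), and carries it across the connected spinner interval; it then obtains the same four slopes by implicit differentiation and multiplies them. Your fallback for Step~1 is already contained in Step~2: \(\del_yf\) is \(2\rt\) times the oriented determinant, so it has opposite constant signs on the two components, and your computed denominators are positive at \((\thetab,\pi-\thetab)\) and \((\pi+\thetab,-\thetab)\) and negative at \((\pi-\thetab,\thetab)\) and \((-\thetab,\pi+\thetab)\).
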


\begin{proof}
\bse
For large \(\rt\), the two components converge to the offset graphs
\(y=x+\pi\mp2\thetab\); hence the two displayed period-two points lie on the same component. This pairing persists throughout the connected spinner interval because Lemmas~\ref{lem:regularity-zero-set} and \ref{lem:projection-discriminant} prevent the two graph components from meeting.

Implicit differentiation of \(f(x,g(x))=0\) gives
\be
g'(x)=-\frac{\del_1f(x,g(x))}{\del_2f(x,g(x))}.
\ee
Applying this identity first to \(g_1\) and then to \(g_2\), and evaluating at the four points in \eqref{eq:period-two-states}, gives the derivatives
\be
\begin{aligned}
g_1'(\thetab)=g_2'(\pi-\thetab)
&=\frac{\cos\thetab+\rt\sin(2\thetab)}{\rt\sin(2\thetab)-\cos\thetab},\\
g_1'(\pi+\thetab)=g_2'(-\thetab)
&=\frac{\rt\sin(2\thetab)-\cos\thetab}{\rt\sin(2\thetab)+\cos\thetab}.
\end{aligned}
\ee
Their products give the reciprocal multipliers in the statement. In the spinner regime
\(\rt\sin^2\thetab>1\), so their strict inequalities follow.
\ese
\end{proof}

\subsection{Spinner standing configurations}

The following theorem is the main exact construction for the physical linkage. Its uniqueness is relative to the alternating component class; unrestricted branch switching is not excluded.
}
{ 
\begin{theorem}[Spinner heteroclinic family]\label{thm:spinners}
Assume \(\rt\sin^2\thetab>1\), equivalently \(d>2/\sin\thetab\). For every
\be
\g\in S^1\setminus\{\thetab,\thetab+\pi\},
\ee
there exists a zero-energy chain \(\{\theta_k\}_{k\in\Z}\) with \(\theta_0=\g\) and
\bel{eq:spinner-asymptotics}
\begin{aligned}
\lim_{k\to+\8}\theta_{2k}&=\thetab+\pi,
&\qquad \lim_{k\to-\8}\theta_{2k}&=\thetab,\\
\lim_{k\to+\8}\theta_{2k+1}&=-\thetab,
&\qquad \lim_{k\to-\8}\theta_{2k+1}&=\pi-\thetab.
\end{aligned}
\ee
More precisely:
\begin{enumerate}
\item \(Z=f^{-1}(0)\) has two connected components \(Z_1,Z_2\), with
\((\thetab,\pi-\thetab),(\pi+\thetab,-\thetab)\in Z_1\) and
\((\pi-\thetab,\thetab),(-\thetab,\pi+\thetab)\in Z_2\).
\item The chain satisfies
\be
(\theta_{2k},\theta_{2k+1})\in Z_1,\qquad
(\theta_{2k+1},\theta_{2k+2})\in Z_2
\ee
for every \(k\).
\item It is the unique chain with \(\theta_0=\g\) satisfying the component condition in item \(2\).
\item Each parity subsequence is strictly monotone on the corresponding lifted arc of \(S^1\).
\item For fixed \(k\), the map \(\g\mapsto\theta_k\) is monotone on each component of
\(S^1\setminus\{\thetab,\thetab+\pi\}\).
\end{enumerate}
\end{theorem}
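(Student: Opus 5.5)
The plan is to reduce everything to the dynamics of the two-step circle map \(P=g_2\circ g_1\), where \(g_i\) is the successor graph of \(Z_i\). By Lemma~\ref{lem:projection-discriminant}, in the regime \(\rt\sin^2\thetab>1\) the set \(Z\) is a disjoint union of two graphs over each coordinate circle. By Lemma~\ref{lem:regularity-zero-set}, \(0\) is a regular value, so these graphs are smooth. Each graph defines a diffeomorphism \(g_i:S^1\to S^1\), since its projection to the second coordinate is also a diffeomorphism. The labelling in item~1 is the one fixed in the proof of Lemma~\ref{lem:two-step-multipliers}: the two period-two points \((\thetab,\pi-\thetab)\) and \((\pi+\thetab,-\thetab)\) lie on the same component, by continuation from large \(\rt\) (Proposition~\ref{prop:spinner-branches}) across the connected spinner interval. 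The symmetric pair lies on the other component. Items~2 and~3 then follow exactly as in Proposition~\ref{prop:selected-two-periodic}. A chain in the prescribed class satisfies \(\theta_{2k+1}=g_1(\theta_{2k})\) and \(\theta_{2k+2}=g_2(\theta_{2k+1})\). Iterating the diffeomorphisms forward and backward from \(\theta_0=\g\) gives existence and uniqueness.

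Next I identify the fixed points of \(P\). A fixed point \(x=P(x)\) yields the period-two zero-energy configuration generated by \((x,g_1(x))\). By Proposition~\ref{lem:period-two-states}, the only such pairs on \(Z_1\) are \((\thetab,\pi-\thetab)\) and \((\pi+\thetab,-\thetab)\). Hence \(\operatorname{Fix}(P)=\{\thetab,\thetab+\pi\}\). I also need \(P\) to be orientation preserving. Each \(g_i\) has degree one, because it is a deformation, along the connected spinner interval, of the large-\(\rt\) maps \(x\mapsto x+\d_\pm\). The sign of \(g_i'\) is constant, since \(\partial_1 f\) and \(\partial_2 f\) never vanish on \(Z\) in this regime. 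Granting that, \(P\) is an orientation-preserving circle diffeomorphism with exactly two fixed points. It therefore maps each of the two arcs of \(S^1\setminus\{\thetab,\thetab+\pi\}\) monotonically onto itself, and on each arc \(P(x)-x\) has constant sign.

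By Lemma~\ref{lem:two-step-multipliers}, \(\thetab\) is repelling and \(\thetab+\pi\) is attracting. Consequently, on both arcs the orbit \(\theta_{2k}=P^k(\g)\) is strictly monotone and converges to \(\thetab+\pi\) as \(k\to+\infty\) and to \(\thetab\) as \(k\to-\infty\). Here the limits are the lifted arc endpoints, and the sign of \(P(x)-x\) is forced by the multipliers. This gives item~4 for the even subsequence and the even half of \eqref{eq:spinner-asymptotics}. The odd subsequence is \(\theta_{2k+1}=g_1(\theta_{2k})\); it is monotone as a homeomorphic image and conjugate to \(g_1\circ g_2\). By continuity it converges to \(g_1(\thetab+\pi)=-\thetab\) and \(g_1(\thetab)=\pi-\thetab\). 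Item~5 holds because \(\theta_k\) is a composition of the monotone maps \(g_1,g_2\) and their inverses applied to \(\g\).

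The main obstacle is the orientation and sign bookkeeping. One must show that \(g_1,g_2\) are genuinely degree-one and have nonvanishing derivative throughout the spinner range, and not only asymptotically. The nonvanishing of \(\partial_2 f\) follows from the graph property. For \(\partial_1 f\), I would use the symmetry \((x,y)\mapsto(-y,-x)\) noted in Lemma~\ref{lem:projection-discriminant}. The degree-one claim would then follow by continuity in \(\rt\) from the explicit limits in Proposition~\ref{prop:spinner-branches}.
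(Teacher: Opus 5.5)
Your proposal is correct and follows the paper's proof. The paper works with the diffeomorphism $m:Z\to Z$ that swaps $Z_1$ and $Z_2$ and studies $m^2|_{Z_1}$, which is conjugate through $\pi_1$ to your $P=g_2\circ g_1$. Like you, it gets orientation preservation by continuation from large $\rt$, the fixed points from Proposition~\ref{lem:period-two-states}, and the direction of convergence from Lemma~\ref{lem:two-step-multipliers}.

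The step you flag as the main obstacle is immediate. Each $g_i$ is a circle diffeomorphism, so $g_i'$ has constant sign. The formulas in the proof of Lemma~\ref{lem:two-step-multipliers} give $g_1'(\thetab)=g_2'(\pi-\thetab)=(\cos\thetab+\rt\sin 2\thetab)/(\rt\sin 2\thetab-\cos\thetab)>0$, because $2\rt\sin\thetab>1$ in the spinner regime. Hence both maps are orientation preserving without any continuation in $\rt$.
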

}

\begin{proof}
\bse
By Lemmas~\ref{lem:regularity-zero-set} and \ref{lem:projection-discriminant}, \(Z\) is regular and has exactly two components, each of which is a graph over either coordinate circle. In the limit \(\rt\to\8\), these components converge to
\be
y=x+\pi-2\thetab,\qquad y=x+\pi+2\thetab.
\ee
After division of \eqref{eq:constraint-f} by \(2\rt^2\), the roots at \(\rt^{-1}=0\) are simple because \(\sin(2\thetab)>0\). The implicit-function theorem therefore gives uniform \(C^1\) convergence of the two graph maps to these offsets.
Since the spinner interval \((\sin^{-2}\thetab,\8)\) is connected and the components never meet there, the first and fourth points in \eqref{eq:period-two-states} lie on one component, denoted by \(Z_1\), and the other two lie on \(Z_2\). This proves item \(1\).

For \((x,y)\in Z_1\), let \(m(x,y)\) be the unique point \((y,z)\in Z_2\). Define \(m\) analogously from \(Z_2\) to \(Z_1\). The graph property gives existence and uniqueness; the same property for the other projection gives a smooth inverse. Hence \(m:Z\to Z\) is a diffeomorphism interchanging \(Z_1\) and \(Z_2\).

{ 
By Proposition~\ref{lem:period-two-states}, the fixed points of \(m^2\) on \(Z_1\) are
\be
p_-=(\thetab,\pi-\thetab),\qquad
p_+=(\pi+\thetab,-\thetab),
\ee
and the two fixed points on \(Z_2\) are their images under \(m\). The complement
\(Z_1\setminus\{p_-,p_+\}\) has two open arcs. For large \(\rt\), the map \(m^2\) is \(C^1\)-close to the identity because the limiting offsets sum to \(2\pi\); hence it is orientation preserving. Its degree cannot change while \(m\) remains a diffeomorphism. Moreover, a fixed point of \(m^2\) is exactly a pair generating a period-two zero-energy chain, and Proposition~\ref{lem:period-two-states} lists all such pairs. Hence no interior fixed point can appear anywhere in the connected spinner regime. Since \(m^2\) leaves both endpoints invariant and is orientation preserving, it preserves each open arc and is a strictly increasing interval map there.

Given \(\g\), let \(z_0=(\g,\theta_1)\) be the unique point of \(Z_1\) with first coordinate \(\g\), and set
\be
\theta_k:=\pi_1\big(m^k(z_0)\big),\qquad k\in\Z.
\ee
This defines a zero-energy chain and proves items \(2\) and \(3\). Since an increasing interval map without an interior fixed point lies strictly on one side of the identity, the even iterates are strictly monotone and converge to the two endpoints of the lifted arc. Lemma~\ref{lem:two-step-multipliers} identifies the attracting endpoint in forward iteration as \(p_+\). Hence
\be
\lim_{k\to+\8}\theta_{2k}=\pi+\thetab,\qquad
\lim_{k\to-\8}\theta_{2k}=\thetab.
\ee
The same argument for \(m^2|_{Z_2}\), followed by one application of \(m\), gives the strict monotonicity and the two odd-site limits in \eqref{eq:spinner-asymptotics}. Because each iterate of the relevant graph diffeomorphisms preserves or reverses order globally on a lifted arc, the map \(\g\mapsto\theta_k\) is monotone there for every fixed \(k\), which proves item \(5\).
}
\ese
\end{proof}

\subsection{An explicit successor formula}

{ 
The proof of Theorem~\ref{thm:spinners} selects successors by their zero-set component. The following formula gives the same two candidates in a local chart; Section~\ref{sec:numerical-spinner} evaluates the nonsingular line--circle equation directly.

\begin{proposition}[Explicit successor roots]\label{prop:explicit-successor}
Let \(x\in S^1\) satisfy \(1-\rt\sin x\neq0\), and set
\be
\label{eq:explicit-successor-coefficients}
\alpha(x)=\frac{\rt\cos x}{1-\rt\sin x},
\qquad
\delta(x)=\frac{\sin x+\rt\cos(2\thetab)}{1-\rt\sin x}.
\ee
Real successors \(y\in S^1\) satisfying \(f(x,y)=0\) exist if and only if
\(1+\alpha(x)^2-\delta(x)^2\geq0\). When this condition holds, they are obtained from
\be
\label{eq:explicit-successor-roots}
\begin{aligned}
\cos y&=\frac{-\alpha(x)\delta(x)\pm
\sqrt{1+\alpha(x)^2-\delta(x)^2}}{1+\alpha(x)^2},\\
\sin y&=\alpha(x)\cos y+\delta(x).
\end{aligned}
\ee
In the spinner regime, the component condition in Theorem~\ref{thm:spinners} selects one of these roots at each step; the selected components alternate, while the \(\pm\) label is local to this chart. At a point where \(1-\rt\sin x=0\), the selected successor is obtained from the nonsingular line--circle equation \eqref{eq:line-circle} and continuity along that component.
\end{proposition}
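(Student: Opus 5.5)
Starting from the line--circle form \eqref{eq:line-circle} of Lemma~\ref{lem:projection-discriminant}, we divide by \(1-\rt\sin x\), which is allowed under the standing hypothesis. This gives the equivalent equation \(\sin y=\alpha(x)\cos y+\delta(x)\), with \(\alpha,\delta\) as in \eqref{eq:explicit-successor-coefficients}. Eliminating \(\sin y\) with \(\sin^2y+\cos^2y=1\) yields the quadratic
\[
(1+\alpha^2)\cos^2y+2\alpha\delta\cos y+\delta^2-1=0 .
\]
Its reduced discriminant is \(\alpha^2\delta^2-(1+\alpha^2)(\delta^2-1)=1+\alpha^2-\delta^2\). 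The quadratic formula then produces the displayed \(\cos y\), and \(\sin y\) is recovered linearly from the line equation.

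For the equivalence, each root \(c=\cos y\) determines \(s=\alpha c+\delta\). We must check that \((c,s)\) lies on the unit circle. This holds because the quadratic is exactly \(c^2+(\alpha c+\delta)^2=1\), and then \(|c|\leq1\) follows automatically. Conversely, any solution \(y\) of \(f(x,y)=0\) gives a root of the quadratic. Hence real successors exist if and only if the discriminant is nonnegative.

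As a consistency check, \(1+\alpha^2-\delta^2\) equals \(D(x)/(1-\rt\sin x)^2\), where \(D\) is the discriminant of Lemma~\ref{lem:projection-discriminant}. This matches the earlier count of two, one, or no successors.

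For the spinner-regime statement, Lemma~\ref{lem:projection-discriminant} gives \(D(x)>0\) everywhere, so the two roots above are distinct. By Theorem~\ref{thm:spinners}, \(Z\) is a union of two graph components \(Z_1,Z_2\), and for each \(x\) each component contributes exactly one of the two successors. The component condition of item~2 therefore selects one root at each step, with the components alternating.

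The \(\pm\) sign is only a chart label. Along a component, the selected root can switch sign label when \(\alpha\) passes through the singular set or changes sign, because the parametrization by \(\cos y\) does not globally distinguish the components. I would state this rather than attempt a global sign rule.

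At points with \(1-\rt\sin x=0\), I would argue as follows. The coefficient vector of \eqref{eq:line-circle} is \((0,-\rt\cos x)\), which is nonzero since \(\rt>1\) in the spinner regime. The line--circle equation is therefore nonsingular, and its solutions are the continuous limits along each graph component. So the selected successor is the unique solution on the appropriate component, obtained by continuity.

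The only delicate point is this last one: justifying that the component selection is continuous through the chart singularity, and being careful that the \(\pm\) label is not globally tied to a component. The remaining steps are routine algebra.
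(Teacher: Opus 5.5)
Your proposal is correct and follows the paper's route: divide \eqref{eq:line-circle} by \(1-\rt\sin x\), reduce to a quadratic in \(\cos y\) with discriminant \(1+\alpha^2-\delta^2\), identify the two distinct roots with the two graph components of Theorem~\ref{thm:spinners}, and use continuity along each component where \(1-\rt\sin x=0\). One small correction to your side remark: the \(\pm\) label cannot switch where \(\alpha\) merely changes sign, because on each connected interval of the chart the positive discriminant keeps the two cosines strictly separated, so each component keeps a fixed label there (the paper's continuity argument), and the labels swap only across the points with \(\sin x=1/\rt\), where the two successors have the same cosine.
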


\begin{proof}
Equation~\eqref{eq:line-circle} is equivalent in this chart to
\(\sin y=\alpha(x)\cos y+\delta(x)\). Substitution into
\(\sin^2y+\cos^2y=1\) gives a quadratic equation for \(\cos y\), whose two roots are exactly \eqref{eq:explicit-successor-roots}. Theorem~\ref{thm:spinners} identifies the two roots with the two graph components. On each connected interval of this chart, one sample point on each component fixes its local sign; continuity then preserves that sign until the alternate chart is required.
\end{proof}
}

\subsection{Flipper constraint geometry}

{  In the flipper regime, the coordinate projections of the constraint curve have folds, and the zero-energy recurrence is two-valued. We begin by classifying the global topology and the fibers of \(Z=\{f(x,y)=0\}\).}
The theorem below determines the flipper constraint geometry and the number of local successors; global branch selection is stated separately as Conjecture~\ref{conj:flipper-selection}.

{ 
\begin{theorem}[Geometry of the flipper zero set]
\label{prop:flipper-geometry}
Assume
\be
\rt\sin^2\thetab<1,
\qquad\hbox{equivalently}\qquad
d<\frac{2}{\sin\thetab}.
\ee
Then \(Z\) is a smooth embedded circle that is null-homotopic in \(M\). Let \(S_{\rt}\) be defined by \eqref{eq:projection-threshold}, and let
\be
\theta_-=\arcsin S_{\rt},\qquad
\theta_+=\pi-\arcsin S_{\rt}.
\ee
For \(x\in(\theta_-,\theta_+)\), there is no \(y\) with \((x,y)\in Z\); for
\(x=\theta_\pm\), there is exactly one; and for
\(x\in S^1\setminus[\theta_-,\theta_+]\), there are exactly two.
Moreover, \(\del_yf\) vanishes on \(Z\) precisely at the two fold points with
\(x\)-coordinates \(\theta_\pm\). The analogous statement holds after exchanging \((x, y)\) with \((-y, -x)\).
\end{theorem}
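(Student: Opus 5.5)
Most of the statement follows from Lemmas~\ref{lem:regularity-zero-set} and \ref{lem:projection-discriminant}. The only new ingredients are connectedness and null-homotopy. Since \(\rt\sin^2\thetab<1\), Lemma~\ref{lem:regularity-zero-set} shows that \(0\) is a regular value of \(f\). Hence \(Z\) is a compact smooth embedded \(1\)-manifold, that is, a finite disjoint union of embedded circles.

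The fiber count over \(x\) is exactly Lemma~\ref{lem:projection-discriminant}. With \(-1<S_{\rt}<1\), the roots of \(\sin x=S_{\rt}\) are \(\theta_-=\arcsin S_{\rt}\) and \(\theta_+=\pi-\arcsin S_{\rt}\). The sign of \(D(x)=(S_{\rt}-\sin x)(\cdots)\), whose second factor is positive, gives no, one, or two successors on \((\theta_-,\theta_+)\), at \(\theta_\pm\), and on the complement, respectively.

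For the fold points, I use the line--circle form \eqref{eq:line-circle}: for fixed \(x\), \(f(x,\cdot)\) is an affine function of \((\cos y,\sin y)\) restricted to the circle. Its zeros are simple exactly when the line is transversal to the unit circle, i.e.\ when \(D(x)>0\). A double root occurs exactly when \(D(x)=0\), i.e.\ at \(x=\theta_\pm\). Here I need the coefficient vector to be nonzero; by the lemma it vanishes only when \(\rt=1\) and \(x=\pi/2\), and that fiber is empty. Therefore \(\partial_y f=0\) on \(Z\) exactly at the two tangency points \((\theta_\pm,y_\pm)\). The symmetry \((x,y)\mapsto(-y,-x)\), noted in the proof of Lemma~\ref{lem:projection-discriminant}, transfers every statement to the other projection.

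Connectedness and null-homotopy are the main step. Set \(A=S^1\setminus(\theta_-,\theta_+)\), a closed arc. The projection \(\pi_1|_Z\) takes values in \(A\), has two preimages over the interior of \(A\), and has a single fold preimage over each endpoint. Over the interior of \(A\), where \(\partial_y f\neq0\), the implicit function theorem gives two smooth disjoint branches \(y=h_1(x)\) and \(y=h_2(x)\). Each extends continuously to both endpoints, where the two branches meet at the fold point. The two branches therefore glue into a single circle, and no further components can exist because the fiber count is exhausted. This circle lies in \(A\times S^1\). Since \(A\) is a contractible arc, the first homology class of \(Z\) is a multiple of the class of \(\{\mathrm{pt}\}\times S^1\).

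Ruling out winding in \(y\) is the delicate point. My first attempt is a continuity argument: \(Z\) is connected, so as \(x\) runs over \(A\) the branches \(h_1\) and \(h_2\) each have a definite lift, and the class of \(Z\) in \(\pi_1(M)\) is \((0,w)\) with \(w=\deg\) of the closed loop obtained by going out along \(h_1\) and back along \(h_2\). Applying the same argument with the roles of \(x\) and \(y\) exchanged (via the symmetry) shows that \(\pi_2(Z)\) also misses an open arc, so this degree must be \(0\). Hence \(Z\) is homotopic to a constant, i.e.\ null-homotopic.

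As a consistency check, I would confirm that the four period-two points of Proposition~\ref{lem:period-two-states} lie on \(Z\). This is expected to be automatic.
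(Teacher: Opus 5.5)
Your proposal is correct and follows essentially the same route as the paper. Both arguments use the same steps: regularity from Lemma~\ref{lem:regularity-zero-set} plus the discriminant fiber count, gluing the two root branches at the two fold points to obtain a single circle, line--circle tangency to locate the zeros of $\partial_y f$, and null-homotopy because both coordinate projections of $Z$ miss an open arc. The paper phrases the last step as $Z$ lying in a product of two proper arcs, i.e.\ a disk; this is exactly your degree-zero conclusion, and the hedged ``first attempt'' framing around it can be dropped. The paper additionally checks that the folds are nondegenerate: $\partial_{yy}f\neq0$ follows from tangency with a nonzero coefficient vector, and then $\partial_x f\neq0$ follows from regularity of $Z$.
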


\begin{proof}
Lemma~\ref{lem:regularity-zero-set} gives smoothness, while
Lemma~\ref{lem:projection-discriminant} gives the stated number of points in each fiber. Over the interior of the admissible \(x\)-arc, the two roots depend smoothly on \(x\); they meet at both endpoints and exhaust the zero set. Consequently, \(Z\) is one embedded circle. Its projection onto either coordinate is a proper arc of \(S^1\), so \(Z\) lies in the product of two proper arcs, a disk in \(M\), and is therefore null-homotopic in \(M\).

The line in \eqref{eq:line-circle} is tangent to the unit circle exactly when its discriminant vanishes. This tangency is equivalent to
\(\del_yf=0\), and \eqref{eq:projection-discriminant} shows that it occurs only for \(x=\theta_\pm\).
At these points \(\cos\theta_\pm\neq0\), so the coefficient vector in \eqref{eq:line-circle} is nonzero. Its tangency to the unit circle gives \(\del_{yy}f\neq0\); regularity of \(Z\) then gives \(\del_xf\neq0\). Thus both critical points are nondegenerate folds.
Symmetry of the entire problem under
\((x,y)\mapsto(-y,-x)\) gives the statement for the other projection.
\end{proof}
}

{ 
 
\subsection{Global branch selection in the flipper regime}

For \(z=(x,y)\in Z\), introduce the successor correspondence
\be
\mathcal T(z):=\{(y,w)\in Z\}.
\ee
Let
\be
\begin{aligned}
p_-&=(\thetab,\pi-\thetab),
& q_-&=(\pi-\thetab,\thetab),\\
p_+&=(-\thetab,\pi+\thetab),
& q_+&=(\pi+\thetab,-\thetab).
\end{aligned}
\ee

\begin{conjecture}[Flipper branch selection]\label{conj:flipper-selection}
In the non-wobbling flipper regime $d < 1$, there are closed arcs \(I_1,I_2\subset Z\), with endpoints
\be
\del I_1=\{p_-,p_+\},\qquad
\del I_2=\{q_-,q_+\},
\ee
and single-valued selections
\be
T_1:I_1\to I_2,\qquad T_2:I_2\to I_1,
\qquad T_i(z)\in\mathcal T(z),
\ee
that are diffeomorphisms on the interiors. The second iterate
\(P=T_2\circ T_1\) is orientation preserving, has only \(p_-\) and \(p_+\) as fixed points, and moves every interior point from \(p_-\) toward \(p_+\).

Consequently, every initial pair in the interior of \(I_1\) generates a unique selected heteroclinic zero-energy chain. Moreover, its even-site angle can be chosen monotone.
\end{conjecture}

\begin{question}[A checkable geometric criterion]\label{question:flipper-sign}
Can the existence and ordering of \(I_1,I_2\) be proved directly from the line--circle representation \eqref{eq:line-circle}, for example by establishing a sign for the derivative of the two-step map between consecutive fold points? If the sign changes for some admissible parameters, what is the smallest counterexample to Conjecture~\ref{conj:flipper-selection}?
\end{question}

\begin{question}[Wobbling flippers]
Is there a simple geometric branch-selection mechanism in the wobbling flipper regime \(1<d<2/\sin\thetab\)? In this range, the monotonicity of even-site angles will not be monotone (this was already observed in \cite{CUV}), but moreover the diffeomorphism structure in Conjecture \ref{conj:flipper-selection} is not always possible.
\end{question}

The local geometry required to formulate Conjecture~\ref{conj:flipper-selection} follows from Theorem~\ref{prop:flipper-geometry}; the existence and global ordering of the selected arcs remain open. 
 
\subsection{Geometric and regime overview}

{  The following diagrams summarize the geometry and the parameter regimes used in the analysis. They are schematic and introduce no additional assumption.}

\begin{figure}[htbp]
\centering
\begin{tikzpicture}[scale=0.92, every node/.style={font=\small}]
\coordinate (A0) at (0,0);
\coordinate (A1) at (3.6,0);
\coordinate (B0) at (1.15,1.65);
\coordinate (B1) at (4.75,-1.65);
\fill (A0) circle (2.3pt);
\fill (A1) circle (2.3pt);
\draw[very thick] (A0)--(B0);
\draw[very thick] (A1)--(B1);
\draw[gray!70!black, thick, decorate, decoration={zigzag,segment length=5pt,amplitude=2pt}] (B0)--(B1);
\draw[densely dashed] (A0)--(A1);
\draw[<->] (A0)+(0,-0.45)--node[below] {$a$}(A1)+(0,-0.45);
\draw[<->] (A0)+(0.15,0.08)--node[left] {$r$}(B0)+(-0.15,-0.08);
\draw[<->] (B0)+(0.1,-0.1)--node[pos=0.52,above,fill=white,inner sep=1pt] {$\bl$}(B1)+(-0.1,0.1);
\draw[densely dashed,revisionviolet] (A0)--+(0,1.25);
\draw[revisionviolet] (0,0.55) arc[start angle=90,end angle=55,radius=0.55];
\node[revisionviolet] at (0.20,0.72) {$\thetab$};
\node[below left=2pt] at (A0) {pivot $k$};
\node[above] at (B0) {free vertex};
\node[above right=2pt] at (A1) {pivot $k+1$};
\end{tikzpicture}
\caption{{  Schematic geometry of two adjacent rods. The reference angle \(\thetab\) is measured from the vertical axis; the rods have length \(r\), the pivots are separated by \(a\), and the spring has relaxed length \(\bl\).}}
\label{fig:rod-geometry}
\end{figure}

\begin{figure}[htbp]
\centering
\begin{tikzpicture}[x=1.1cm,y=1cm, every node/.style={font=\small}]
\fill[blue!8] (0,0.35) rectangle (2.1,0.95);
\fill[blue!18] (2.1,0.35) rectangle (5.2,0.95);
\fill[blue!32] (5.2,0.35) rectangle (8,0.95);
\draw[->,thick] (-0.25,0)--(8.3,0) node[right] {$d=2\rt\sin\thetab$};
\draw[thick] (0,0.35)--(8,0.35);
\draw[thick] (0,0.95)--(8,0.95);
\draw[thick] (0,0.35)--(0,0.95);
\draw[thick] (2.1,0.2)--(2.1,1.1);
\draw[thick] (5.2,0.2)--(5.2,1.1);
\draw[thick] (8,0.35)--(8,0.95);
\node[below] at (2.1,0.2) {$1$};
\node[below] at (5.2,0.2) {$2/\sin\thetab$};
\node at (1.05,0.65) {flipper};
\node at (3.65,0.65) {wobbling flipper};
\node at (6.6,0.65) {spinner};
\end{tikzpicture}
\caption{{  Parameter regimes used in Section~\ref{sec:2}. The boundary between the flipper and wobbling-flipper regimes is $d=1$, while the spinner threshold is $d=2/\sin\thetab$.}}
\label{fig:regimes}
\end{figure}

\begin{figure}[htbp]
\centering
\begin{tikzpicture}[x=1.55cm,y=1cm, every node/.style={font=\small}]
\foreach \x/\lab in {0/{v_{m-1}},1/{u_m},2/{v_m},3/{u_{m+1}}} {
  \fill[revisionviolet] (\x,0) circle (2.6pt);
  \node[above] at (\x,0) {$\lab$};
}
\draw[thick,->] (0.13,0)--(0.87,0) node[midway,below] {$\Gbf$};
\draw[thick,->] (1.13,0)--(1.87,0) node[midway,below] {$\Gbf$};
\draw[thick,->] (2.13,0)--(2.87,0) node[midway,below] {$\Gbf$};
\node[below=6pt] at (1,0) {$x=2ma$};
\node[below=6pt] at (3,0) {$x=2(m+1)a$};
\draw[<->] (1,-0.78)--node[below] {$a$}(2,-0.78);
\draw[<->,revisionviolet] (1,-1.22)--node[below] {$h=2a$}(3,-1.22);
\end{tikzpicture}
\caption{{  Exact two-cell decomposition from Lemma~\ref{lem:even-odd-system}. Adjacent sites are separated by \(a\), while equal-parity cells are separated by \(h=2a\).}}
\label{fig:two-component}
\end{figure}

} 


\section{Numerical experiments}
\label{sec:numerics}

\subsection{Discrete equations, common grid, and verification criteria}
\label{sec:numerical-method}

\subsubsection{Purpose}
The computations in this section serve three distinct purposes. 
\bei 
\item First, we construct a selected zero-energy spinner chain for the physical linkage and compare its asymptotic rate with Lemma~\ref{lem:two-step-multipliers}. 

\item Second, we evaluate the discrete residual on a prescribed smooth function in order to test the two consistency orders in Propositions~\ref{prop:first-order-consistency} and \ref{prop:second-order-consistency}. 
\item Third, we evolve the discrete, leading, and corrected models from identical smooth periodic data and measure their discrepancies. 
\eei

{  
\subsubsection{Discrete and continuum evolutions}
We use the simplified pair potential in \eqref{eq:simple-potential}, set \(\sigma=\beta a\), and work in the continuum time \(T=at\) used in Proposition~\ref{prop:first-order-consistency}. On the periodic grid \(x_j=ja\), \(0\leq j<N\), with \(a=2\pi/N\), we define
\be
\label{eq:num-bond-phase}
z_j=U_{j+1}-U_j-\beta a\cos U_j,
\ee
where all spatial indices are taken modulo \(N\). Direct differentiation of the pair potential gives the discrete equation
\be
\label{eq:num-discrete-evolution}
\del_T^2U_j=\frac{1}{2a^2}\Big(-\sin z_{j-1}+(1-\beta a\sin U_j)\sin z_j\Big).
\ee
The two continuum equations compared with \eqref{eq:num-discrete-evolution} are
\be
\label{eq:num-leading-evolution}
\del_T^2U=\frac12\del_x^2U+\frac{\beta^2}{4}\sin(2U)
\ee
and
\be
\label{eq:num-corrected-evolution}
\del_T^2U=\frac12\del_x^2U+\frac{\beta^2}{4}\sin(2U)-\frac{\beta a}{4}\Big(2\sin U\,\del_x^2U+\cos U\,(\del_xU)^2\Big).
\ee
The sign and derivative order used in the implementation were checked directly by differentiating the discrete potential. The independent finite-difference force test described below provides a second check of this calculation.

\subsubsection{Spatial and temporal discretization}
For both continuum equations, we use the centered operators
\be
\label{eq:numerical-difference-operators}
D_0U_j=\frac{U_{j+1}-U_{j-1}}{2a},\qquad D_2U_j=\frac{U_{j+1}-2U_j+U_{j-1}}{a^2}.
\ee
The resulting second-order systems are written in first-order form and advanced by the classical fourth-order Runge--Kutta method. For a final time \(T_*\), we set
\be
\label{eq:num-time-step}
M_N=\left\lceil\frac{T_*}{0.12a}\right\rceil,\qquad \Delta T=\frac{T_*}{2M_N}.
\ee
Every run is repeated with the coarser step \(2\Delta T\). The temporal discrepancy is measured between these two computations on the same spatial grid and is required to be substantially smaller than the discrepancy between models.
More precisely, for \(Y=(U,V)\) and \(F(Y)=(V,\mathcal A(U))\), where \(\mathcal A\) is the relevant right-hand side of \eqref{eq:num-discrete-evolution}, \eqref{eq:num-leading-evolution}, or \eqref{eq:num-corrected-evolution} with the spatial derivatives replaced by \eqref{eq:numerical-difference-operators}, we use
\be
\label{eq:num-rk4}
\begin{aligned}
K_1&=F(Y_n),&K_2&=F(Y_n+\tfrac12\Delta T K_1),\\
K_3&=F(Y_n+\tfrac12\Delta T K_2),&K_4&=F(Y_n+\Delta T K_3),\\
Y_{n+1}&=Y_n+\tfrac16\Delta T(K_1+2K_2+2K_3+K_4).
\end{aligned}
\ee
The coarse run uses \(M_N\) steps and the reported fine run uses \(2M_N\) steps; for \(T_*=1.5\) these fine step counts are \(256,510,1020,2038,4076\) as \(N\) runs through \(64,128,256,512,1024\). Spatial differences act on a consistent real lift of the angles; only the comparison norm reduces differences modulo \(2\pi\).

\subsubsection{Error and energy diagnostics}
Since all three computations use the same periodic nodes, we measure
\be
\label{eq:num-error-norms}
E_\infty(T)=\max_j\operatorname{dist}_{S^1}(U_j(T),V_j(T)),\qquad E_2(T)=\left(\frac1N\sum_{j=0}^{N-1}\operatorname{dist}_{S^1}(U_j(T),V_j(T))^2\right)^{1/2}.
\ee
Here \(\operatorname{dist}_{S^1}(p,q)=|((p-q+\pi)\bmod 2\pi)-\pi|\). The adjacent-mesh rates in the tables are \(\log(E_N/E_{2N})/\log2\); each fitted rate is the least-squares slope of \(\log E_N\) against \(\log a\) on the four finest meshes.
For the discrete chain, the conserved semidiscrete energy is
\be
\label{eq:num-discrete-energy}
\mathcal E_a(U,V)=a\sum_{j=0}^{N-1}\left(\frac12V_j^2+\frac1{a^2}\sin^2\!\left(\frac{z_j}{2}\right)\right).
\ee
We monitor the maximum relative drift of \(\mathcal E_a\) at more than two hundred times during each run. For a run with \(S=2M_N\) steps, the energy is sampled every \(\max(1,\lfloor S/200\rfloor)\) steps and at the endpoint; the reported drift is the maximum of \(|\mathcal E_a(T_n)-\mathcal E_a(0)|/\max(|\mathcal E_a(0)|,10^{-15})\) over these samples. The analogous leading-model energy is checked in the data file; no conserved energy is claimed for the corrected discretization. As an implementation check independent of time evolution, centered finite differences of the pair potential reproduce its two analytic derivatives with maximum absolute discrepancy \(4.4\times10^{-11}\).

\subsection{Selected spinner chain for the physical linkage}
\label{sec:numerical-spinner}

We first test the exact recurrence underlying Theorem~\ref{thm:spinners} and Proposition~\ref{prop:explicit-successor}. We choose
\be
\label{eq:num-spinner-parameters}
\thetab=\frac{\pi}{6},\qquad \rt=4.5,\qquad d=2\rt\sin\thetab=4.5>\frac{2}{\sin\thetab}=4,
\ee
and prescribe \(\theta_0=\pi/2\). For fixed \(x\), equation \eqref{eq:line-circle} has two explicit successor values. Writing
\be
\label{eq:num-successor-coefficients}
A(x)=1-\rt\sin x,\qquad B(x)=-\rt\cos x,\qquad C(x)=\sin x+\rt\cos(2\thetab),
\ee
we solve \(A(x)\sin y+B(x)\cos y=C(x)\) and select successively the root closest, in a consistent lift, to \(x+\pi-2\thetab\) and \(x+\pi+2\thetab\). The backward orbit is obtained from the symmetry \((x,y)\mapsto(-y,-x)\). This procedure evaluates the exact zero-energy constraint and introduces no minimization or time discretization.
Explicitly, set \(\rho=(A^2+B^2)^{1/2}\), \(\delta=\operatorname{atan2}(B,A)\), and \(\alpha=\arcsin(C/\rho)\). The candidates modulo \(2\pi\) are \(y=\alpha-\delta\) and \(y=\pi-\alpha-\delta\). At each even, respectively odd, index we lift both candidates near \(x+\pi-2\thetab\), respectively \(x+\pi+2\thetab\), and choose the closer one. We use 32 cells on either side of \(m=0\); reverse iteration applies the stated symmetry and the same nearest-lift rule.

Figure~\ref{fig:numerical-spinner} shows the two reduced parity components on \(-32\leq m\leq32\). The maximum value of \(|f(\theta_k,\theta_{k+1})|\) is \(1.11\times10^{-12}\). The oriented determinant \(\Delta(x,y):=A(x)\cos y-B(x)\sin y\) distinguishes the two line--circle intersections. On the computed even edges, \(\Delta\geq1.299\); on the odd edges, \(\Delta\leq-3.031\). These nonzero margins verify the alternating component assignment for this orbit. Lemma~\ref{lem:two-step-multipliers} predicts the attracting multiplier
\be
\label{eq:num-spinner-multiplier}
q_+=\left|\frac{\cos\thetab-\rt\sin(2\thetab)}{\cos\thetab+\rt\sin(2\thetab)}\right|^2=0.40495868.
\ee
A least-squares fit of the logarithm of the endpoint error over indices \(m\geq3\) in the range where it lies between \(10^{-12}\) and \(0.25\) gives \(q_{+,\mathrm{num}}=0.40492540\). The relative discrepancy is \(8.2\times10^{-5}\).

\begin{figure}[htbp]
\centering
\includegraphics[width=0.98\textwidth]{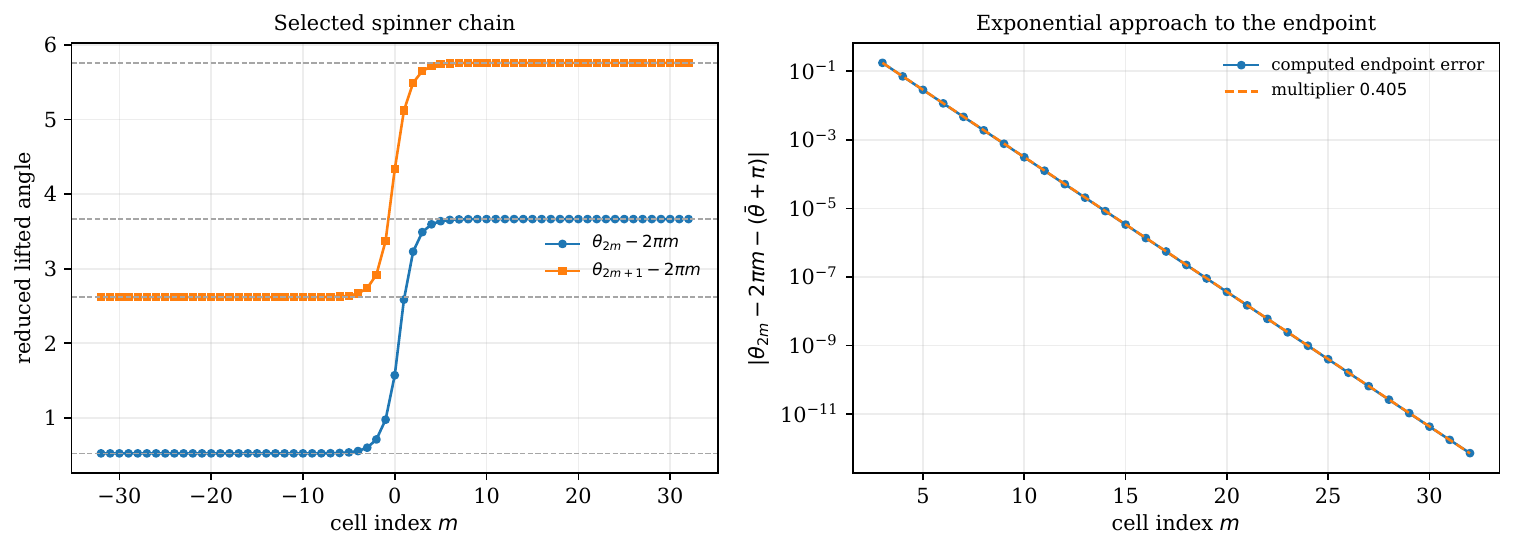}
\caption{Selected spinner chain for \(\thetab=\pi/6\), \(\rt=4.5\), and \(\theta_0=\pi/2\). Left: reduced lifted even and odd components and their four limiting states. Right: approach of the even component to its forward endpoint, together with the multiplier from \eqref{eq:num-spinner-multiplier}.}
\label{fig:numerical-spinner}
\end{figure}

This computation provides a reproducible orbit in the alternating component class of Theorem~\ref{thm:spinners} and an independent numerical check of Lemma~\ref{lem:two-step-multipliers}. It does not test orbital stability under the time-dependent rod-linkage equation.

\subsection{Manufactured residual and local consistency orders}
\label{sec:numerical-residual}

We next test the local expansions without introducing a numerical solution of either continuum equation. On \([0,2\pi]\), we prescribe the smooth periodic function
\be
\label{eq:num-manufactured-field}
u(x,T)=\frac\pi2+0.23\cos x\cos T+0.11\sin(2x)\sin(0.7T),
\ee
set \(\beta=1.4\), and evaluate all quantities at \(T=0.73\). The normalized discrete residual is formed by inserting the exact values of \(u\) and \(\del_T^2u\) into \eqref{eq:num-discrete-evolution}. We compare it first with the leading differential operator in Proposition~\ref{prop:first-order-consistency}, and then with the corrected operator in Proposition~\ref{prop:second-order-consistency}. Consequently, the experiment measures the consistency expansions themselves and is unaffected by temporal integration error.
The spatial derivatives \(u_x\) and \(u_{xx}\) in these comparison operators are evaluated analytically from \eqref{eq:num-manufactured-field}, not by \(D_0\) or \(D_2\). Thus the table isolates the Taylor residual of the exact lattice force from the truncation error of the continuum spatial discretization.

Table~\ref{tab:manufactured-residual} and Figure~\ref{fig:manufactured-residual} show the maximum discrepancies. The rates between the two finest meshes are \(1.002\) and \(1.988\); least-squares fits over the four finest meshes give \(1.004\) and \(1.971\). These values reproduce the orders \(O(a)\) and \(O(a^2)\) in the two propositions.

\begin{table}[htbp]
\centering
\caption{Maximum discrepancy between the normalized discrete residual and the leading or corrected differential operator for \eqref{eq:num-manufactured-field}.}
\label{tab:manufactured-residual}
\begin{tabular}{rccccc}
\hline
\(N\) & \(a\) & leading error & rate & corrected error & rate\\
\hline
64   & \(9.817\times10^{-2}\) & \(2.371\times10^{-2}\) & --    & \(4.540\times10^{-4}\) & --\\
128  & \(4.909\times10^{-2}\) & \(1.175\times10^{-2}\) & 1.013 & \(1.227\times10^{-4}\) & 1.888\\
256  & \(2.454\times10^{-2}\) & \(5.845\times10^{-3}\) & 1.007 & \(3.177\times10^{-5}\) & 1.949\\
512  & \(1.227\times10^{-2}\) & \(2.915\times10^{-3}\) & 1.004 & \(8.080\times10^{-6}\) & 1.975\\
1024 & \(6.136\times10^{-3}\) & \(1.456\times10^{-3}\) & 1.002 & \(2.037\times10^{-6}\) & 1.988\\
\hline
\end{tabular}
\end{table}

\begin{figure}[htbp]
\centering
\includegraphics[width=0.67\textwidth]{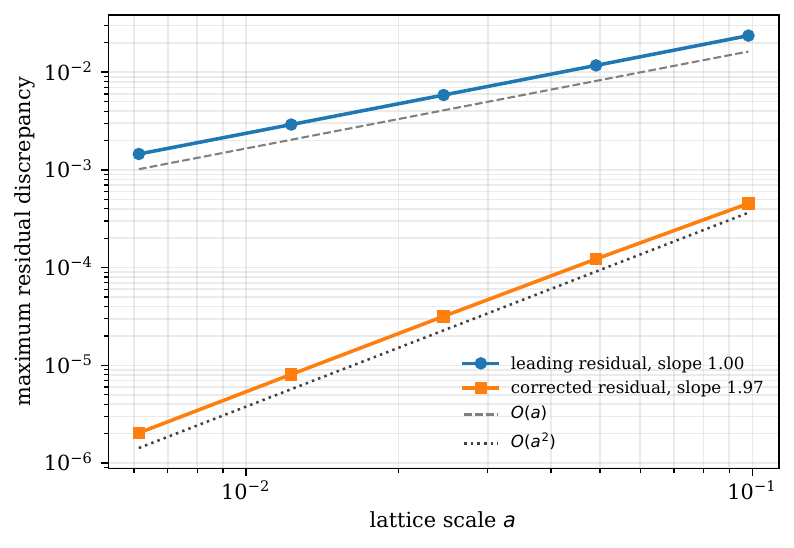}
\caption{Manufactured-residual test for the smooth field \eqref{eq:num-manufactured-field}. The observed slopes reproduce the one-power improvement predicted by Proposition~\ref{prop:second-order-consistency}.}
\label{fig:manufactured-residual}
\end{figure}

\subsection{Dynamic comparison from common periodic data}
\label{sec:numerical-dynamics}

We now solve the three semidiscrete systems through \(T_*=1.5\), with \(\beta=1.4\), from the common initial data
\be
\label{eq:num-initial-data}
\begin{aligned}
U(x,0)&=\frac\pi2+0.31\cos x+0.12\sin(2x),\\
\del_TU(x,0)&=0.08\sin x-0.04\cos(2x).
\end{aligned}
\ee
These data are smooth, periodic, and remain in one angular chart during the computed interval. The same sampled values are passed to \eqref{eq:num-discrete-evolution}, \eqref{eq:num-leading-evolution}, and \eqref{eq:num-corrected-evolution}; hence interpolation and boundary errors do not enter the comparison.
There is no random initialization. Each temporal check is the angular \(E_\infty\) discrepancy between the fine and coarse runs of the same model; the last column of Table~\ref{tab:dynamic-convergence} reports the largest of the three checks.

Figure~\ref{fig:dynamic-profiles} displays the three profiles for \(N=64\). The corrected profile is visually indistinguishable from the discrete one at this scale, while the right panel resolves their pointwise discrepancies. Table~\ref{tab:dynamic-convergence} gives \(E_\infty(T_*)\) and the temporal check. The leading discrepancy has an observed adjacent-mesh rate of \(1.005\) between the two finest meshes and a fitted rate of \(1.012\). The corrected discrepancy has an observed adjacent-mesh rate of \(1.997\) between the two finest meshes and a fitted rate of \(1.994\). At \(N=1024\), the corrected discrepancy is \(6.86\times10^{-7}\), whereas the largest temporal discrepancy among the three models is \(1.73\times10^{-14}\). The maximum relative drift of the discrete energy \eqref{eq:num-discrete-energy} over all five fine runs is \(1.59\times10^{-12}\).

\begin{figure}[htbp]
\centering
\includegraphics[width=0.98\textwidth]{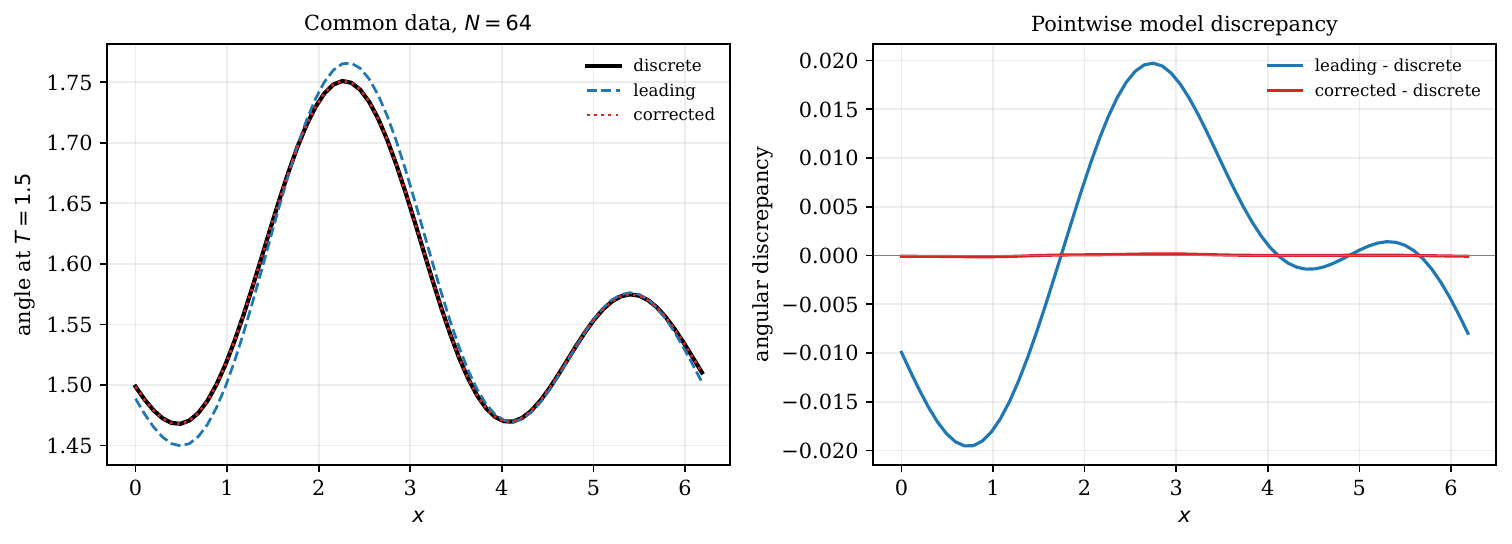}
\caption{Dynamic comparison at \(T=1.5\) for the common periodic data \eqref{eq:num-initial-data} and \(N=64\). Left: discrete, leading, and corrected profiles. Right: angular discrepancy of each continuum model relative to the discrete chain.}
\label{fig:dynamic-profiles}
\end{figure}

\begin{table}[htbp]
\centering
\caption{Dynamic discrepancy at \(T_*=1.5\) with second-order continuum spatial operators. The last column is the largest temporal discrepancy among the discrete, leading, and corrected runs when \(\Delta T\) is replaced by \(2\Delta T\).}
\label{tab:dynamic-convergence}
\begin{tabular}{rccccc}
\hline
\(N\) & leading \(E_\infty\) & rate & corrected \(E_\infty\) & rate & time check\\
\hline
64   & \(1.972\times10^{-2}\) & --    & \(1.712\times10^{-4}\) & --    & \(3.99\times10^{-10}\)\\
128  & \(9.587\times10^{-3}\) & 1.040 & \(4.338\times10^{-5}\) & 1.981 & \(2.52\times10^{-11}\)\\
256  & \(4.726\times10^{-3}\) & 1.020 & \(1.092\times10^{-5}\) & 1.990 & \(1.57\times10^{-12}\)\\
512  & \(2.346\times10^{-3}\) & 1.010 & \(2.739\times10^{-6}\) & 1.995 & \(1.03\times10^{-13}\)\\
1024 & \(1.169\times10^{-3}\) & 1.005 & \(6.860\times10^{-7}\) & 1.997 & \(1.73\times10^{-14}\)\\
\hline
\end{tabular}
\end{table}

\begin{figure}[htbp]
\centering
\includegraphics[width=0.67\textwidth]{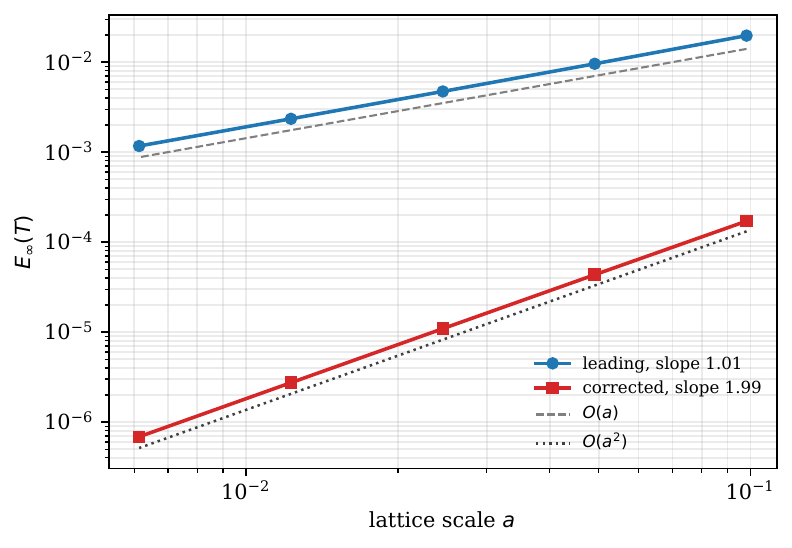}
\caption{Dynamic model discrepancy for the common data \eqref{eq:num-initial-data}. The discrepancies decrease at first- and second-order rates, respectively, over the tested range.}
\label{fig:dynamic-convergence}
\end{figure}
\FloatBarrier
The operators in \eqref{eq:numerical-difference-operators} are second order in space, as is the corrected discrepancy in Table~\ref{tab:dynamic-convergence}. We therefore repeat the continuum integrations with fourth-order centered derivatives, leaving the exact lattice equation, initial data, time steps, and norm unchanged. The alternative operators are
\be
\label{eq:num-fourth-order-operators}
\begin{aligned}
D_{1,4}U_j&=\frac{U_{j-2}-8U_{j-1}+8U_{j+1}-U_{j+2}}{12a},\\
D_{2,4}U_j&=\frac{-U_{j-2}+16U_{j-1}-30U_j+16U_{j+1}-U_{j+2}}{12a^2}.
\end{aligned}
\ee
Table~\ref{tab:spatial-order-check} gives the resulting discrepancies. The corrected fourth-order comparison retains an approximately quadratic rate, but its magnitude differs from the second-order comparison. At \(N=1024\), the two corrected continuum discretizations differ by \(1.017\times10^{-6}\), exceeding the \(6.860\times10^{-7}\) corrected discrepancy in Table~\ref{tab:dynamic-convergence}. Thus the latter is a comparison with a second-order semidiscrete continuum model, not an isolated error estimate against the continuum PDE.
\begin{table}[htbp]
\centering
\caption{Spatial-order sensitivity at \(T_*=1.5\). The first two error columns compare the discrete chain with the leading and corrected models using \eqref{eq:num-fourth-order-operators}; the last compares the corrected second- and fourth-order continuum discretizations.}
\label{tab:spatial-order-check}
\begin{tabular}{rccc}
\hline
\(N\) & leading, fourth order & corrected, fourth order & corrected spatial change\\
\hline
64   & \(1.997\times10^{-2}\) & \(4.214\times10^{-4}\) & \(2.621\times10^{-4}\)\\
128  & \(9.651\times10^{-3}\) & \(1.058\times10^{-4}\) & \(6.565\times10^{-5}\)\\
256  & \(4.742\times10^{-3}\) & \(2.644\times10^{-5}\) & \(1.635\times10^{-5}\)\\
512  & \(2.350\times10^{-3}\) & \(6.607\times10^{-6}\) & \(4.076\times10^{-6}\)\\
1024 & \(1.170\times10^{-3}\) & \(1.651\times10^{-6}\) & \(1.017\times10^{-6}\)\\
\hline
\end{tabular}
\end{table}
\FloatBarrier

For these smooth data and this fixed time interval, the two continuum evolutions remain close to the discrete trajectory, and their formal orders are observed at the solution level. This is numerical evidence only; it does not prove a stability estimate or a discrete-to-continuum error bound.
The fourth-order check confirms the rates but shows that the corrected error magnitude in Table~\ref{tab:dynamic-convergence} is sensitive to the spatial scheme.

}

\phantomsection
\addcontentsline{toc}{section}{Acknowledgments}
\subsubsection*{Acknowledgments}
The first author (DK) was supported by an NSF postdoctoral fellowship, MS 1502852, at the time most of this project was completed. He is currently supported by NSF DMS grant 2554417.
Most of this research was done when the second author (PLF) was a visiting research fellow at the Courant Institute for Mathematical Sciences, New York University. He is currently supported by the French project ANR-23-CE40-0010-02 and the European project MSCA-101131233.

\phantomsection 
\subsubsection*{Note on AI use}
After preparing a preliminary version of this manuscript, we used OpenAI's ChatGPT 5.6 to improve the flow of the exposition, check spelling and grammar, and write the Python code for Section~\ref{sec:numerics}. 


\bibliographystyle{plain}
\bibliography{DK-PLF}

\end{document}